\definecolor{skyblue}{RGB}{135,206,250}
\newcommand{\set}[1]{\left\{ #1 \right\}}
\newcommand{\Hom}[0]{\operatorname{Hom}}
\newcommand{\mc}[1]{{\mathcal{#1}}}	
\newcommand{\mf}[1]{{\mathfrak{#1}}}
\newcommand{\complex}{{\mathbb C}}
\newcommand{\integ}{{\mathbb Z}}
\newcommand{\real}[0]{\mathbb{R}}
\newcommand{\minv}{^{-1}}
\newcommand{\myspan}[0]{\operatorname*{span}}
\newcommand{\id}[0]{\operatorname*{id}}
\newcommand{\tr}[0]{\operatorname*{tr}}
\newcommand{\pr}[0]{\operatorname{pr}}	
\newcommand{\inv}[1]{#1^{-1}}				
\newcommand{\eq}[2]{\begin{equation} #1 \label{#2} \end{equation} } 
\newcommand{\Ad}[0]{\operatorname{Ad}}	 	 
\newcommand{\normm}[1]{{\left\| #1 \right\| }} 
\newcommand{\inn}[2]{\displaystyle \left\langle #1, #2 \right\rangle}
\newtheorem*{rep@theorem}{\rep@title}
\newcommand{\newreptheorem}[2]{%
\newenvironment{rep#1}[1]{%
 \def\rep@title{#2 \ref{##1}}%
  \begin{rep@theorem}}%
   {\end{rep@theorem}}}
\newtheorem{definition}{Definition}[section]
\newtheorem{theorem}[definition]{Theorem}
\newtheorem{proposition}[definition]{Proposition}
\newtheorem{corollary}[definition]{Corollary}
\newtheorem{lemma}[definition]{Lemma}
\newtheorem{remark}[definition]{Remark}
\begin{document}


\title{Non-abelian convexity of based loop groups}
\author{Tyler Holden}\email{tholden@math.toronto.edu}

\begin{abstract}
	If $K$ is a compact, connected, simply connected Lie group, its based loop group $\Omega K$ is endowed with a Hamiltonian $S^1 \times T$ action, where $T$ is a maximal torus of $K$. Atiyah and Pressley \cite{Atiyah.Pressley1983} examined the image of $\Omega K$ under the moment map $\mu$, while Jeffrey and Mare \cite{JeffreyMare2010} examined the corresponding image of the real locus $\Omega K^\tau$ for a compatible anti-symplectic involution $\tau$. Both papers generalize well known results in finite dimensions, specifically the Atiyah-Guillemin-Sternberg theorem, and Duistermaat's convexity theorem. In the spirit of Kirwan's convexity theorem \cite{Kirwan1984}, this paper aims to further generalize the two aforementioned results by demonstrating convexity of $\Omega K$ and its real locus $\Omega K^\tau$ in the full non-abelian regime, resulting from the Hamiltonian $S^1\times K$ action. In particular, this is done by appealing to the Bruhat decomposition of the algebraic (affine) Grassmannian, and appealing to the ``highest weight polytope'' results for Borel-invariant varieties of Guillemin and Sjamaar \cite{GuilleminSjamaar2006} and Goldberg \cite{Goldberg2009}.
\end{abstract}

\maketitle


\section{Introduction} 
\label{sec:Introduction}

Let $K$ be a compact group with Lie algebra $\mf k$, and consider the collection of Sobolev class $H^1$ based loops on $K$:
$$ \Omega K = \set{ \gamma \in H^1(S^1,K): \gamma(e_{S^1}) = e_K}.$$
This set has the structure of a Hilbert manifold and inherits a group structure via pointwise multiplication of loops. Elaborated upon in Section \ref{sec:Setup}, $\Omega K$ can be endowed with a K\"ahler structure and a canonical Hamiltonian $S^1\times K$ action with corresponding moment map $\mu: \Omega K \to \real \oplus \mf k^*$.

Generalizing upon the finite dimensional results of the Atiyah-Guillemin-Sternberg Theorem \cite{Atiyah1982, Guillemin.Sternberg1984}, Atiyah and Pressley \cite{Atiyah.Pressley1983} showed that by restricting the $S^1\times K$ action to that of a maximal torus $S^1 \times T$, the image $\mu(\Omega K)$ is an unbounded convex region, generated as the convex hull of infinitely many discrete points. Generalizing the work of Duistermaat \cite{Duistermaat1983}, Jeffrey and Mare \cite{JeffreyMare2010} showed that if $\Omega K$ is additionally endowed with an anti-symplectic involution $\tau: \Omega K \to \Omega K$, then the image of $\Omega K$ under the moment map coincides with that of its real locus $\Omega K^\tau$.  The purpose of this paper is two-fold: The first is to describe the image of $\Omega K$ for the non-abelian moment map, while the second is to give a non-abelian version of Duistermaat convexity. In particular, if $\Omega_\text{alg} K \subseteq \Omega K$ is the subgroup of algebraic maps, and $\Delta(\Omega K) = \mu(\Omega K) \cap \mf t_+^*$ then our two main theorems are as follows:

\begin{reptheorem}{thm:NonabelianConvexity}
	The set $\displaystyle \Delta(\Omega_\text{alg} K)$ is convex.
\end{reptheorem}

\begin{reptheorem}{thm:AlgebraicConvexity}
	If $\Omega_\text{alg} K^\tau$ are the $\tau$-fixed points of $\Omega_\text{alg}K$, then $\displaystyle \Delta(\Omega_\text{alg} K^\tau) = \Delta(\Omega_\text{alg} K).$
\end{reptheorem}

In both cases, the corresponding result for $\Omega K$ is an immediate corollary. Moreover, Theorem \ref{thm:AlgebraicConvexity} will answer in the affirmative an open question speculated upon in \cite{JeffreyMare2010}. 

Our approach mirrors the one utilized in \cite{Atiyah.Pressley1983}, wherein the authors examined the convexity properties of a distinguished dense subset $\Omega_\text{alg} K \subseteq \Omega K$. The set $\Omega_\text{alg} K$ has the fortuitous advantage of admitting a cellular decomposition, for which the closure of each cell is a (possibly singular) finite dimensional projective variety. By embedding this variety into a K\"ahler manifold, the results of \cite{Atiyah1982} apply, yielding convexity of the cell closure. From this, convexity of $\Omega_\text{alg} K$ and consequently $\Omega K$ can be inferred. The approach described in this treatise will also utilize the cellular decomposition of $\Omega_\text{alg} K$, but instead will use the results of \cite{Goldberg2009,GuilleminSjamaar2006, OShea.Sjamaar2000} to demonstrate convexity on each cell closure.


\section{Setup} 
\label{sec:Setup}

There is a great deal of overlap in the foundation material, notation, and preliminaries required to prove our two main results. 
This section is dedicated to establishing that common ground, while each of the subsequent sections will introduce theorem-specific results.

\paragraph{\bf The Based Loop Group:} Let $K$, $T$, and $\Omega K$ be as described in Section \ref{sec:Introduction}. 
Denote by $K_\complex$ and $T_\complex$ the complexification of $K$ and $T$ respectively; by $\mf k, \mf t$ the corresponding Lie algebras; and by $\mf k_\complex, \mf t_\complex$ the complexification of those Lie algebras.
To our choice of $T_\complex$ there is an associated Borel subgroup, denoted by $B\subset K_\complex$.

We can endow $\Omega K$ with the structure of a symplectic (in fact K\"ahler) manifold as follows: The Killing form  on $\mf k$ yields an $\Ad$-invariant inner product $\inn\cdot\cdot_\mf k$, and since $\Omega K$ is a homogeneous space \cite{Pressley.Segal1986} it is enough to give the K\"ahler form at the identity:
$$ \omega_{\Omega K}(X,Y) = \frac1{2\pi} \int_0^{2\pi} \inn{X}{Y'}_\mf k, \qquad X,Y \in C^\infty(S^1,\mf k).$$
There is a natural Hamiltonian $S^1 \times K$-action on $\Omega K$, given by
$$ \left[(s,k) \cdot \gamma\right](\theta) = k \gamma(\theta + s) \gamma(s)\minv k\minv, \qquad s \in S^1, k \in K, \gamma \in \Omega K$$
wherein the $S^1$ action is effectively loop rotation.
Using our $\Ad$-invariant inner product, we identify $\mf k \cong \mf k^*$ and define a moment map $\mu: \Omega K \to \real\oplus \mf k$ by 
\eq{ \mu(\gamma) = \left( \frac1{4\pi} \int_{S^1} \normm{\gamma\minv \gamma'}, \frac1{2\pi} \int_{S^1} \gamma\minv\gamma' \right),}{eq:MomentMap}
where the norm is that induced by $\inn\cdot\cdot_\mf k$.
If $H \subseteq K$ is a closed subgroup of $K$ (most notably when $H$ is a torus), we will denote by $\mu_H$ the corresponding moment map for the $S^1 \times H$ action; that is $\mu_H = (\id\times \pr_\mf h)\circ \mu$ where $\pr_{\mf h}: \mf k \to \mf h$ is orthogonal projection.

\paragraph{\bf Algebraic Loops:} Critical to our study will be the \emph{algebraic based loop group} $\Omega_\text{alg} K$, which arises as the dense subgroup of $\Omega K$ whose maps are the restriction of algebraic maps $\complex^\times \to K_\complex$. 
This group exhibits many of the properties of a finite dimensional reductive algebraic group, for our purposes the most important of which is the Bruhat decomposition. 

Let $L_\text{alg} K_\complex = \set{ f:\complex^\times \to K^\complex: f\text{ algebraic}}$ and $L_\text{alg}^+K_\complex$ be those maps which extend holomorphically to the interior of the unit disk. 
Consider the evaluation map $\text{ev}_0: L_\text{alg}^+ K_\complex \to K_\complex$ given by $\text{ev}_0(\gamma) = \gamma(0)$.
The \emph{Iwahori subgroup} $\mc B$ is the preimage of the Borel under $\text{ev}_0$; that is, $\mc B = \text{ev}_0\minv(B)$.
The Bruhat decomposition is
\eq{\Omega_\text{alg} K = \bigsqcup_{\lambda \in X_*(T)} \mc B \lambda}{eq:BruhatDecomposition}
where each $\mc B\lambda$ is a complex cell indexed by the coweights $X_*(T)$.
The closure of each cell is a finite dimensional projective variety, and has an explicit description as the union of all previous cells under the Bruhat order; namely
\eq{ \overline{\mc B\lambda} = \bigsqcup_{\eta\leq\lambda} \mc B \eta.}{eq:BruhatClosure}
We refer to $\overline{\mc B\lambda}$ as Schubert varieties, in analog of the finite dimensional case. 
We direct the reader to \cite{Mitchell1988} for more information on the Bruhat decomposition of $\Omega_\text{alg} K$.

\paragraph{\bf Filtrations and Embeddings:} The advantage of examining the individual cells $\overline{\mc B\lambda}$ is that each admits an embedding into a finite dimensional Grassmannian, which we describe here.

Choosing a faithful unitary representation realizes $K$ as a subgroup of $SU(n)$, and permits us to view $\Omega_\text{alg} K$ as maps described by finite Fourier series.
For any fixed $m \in \mathbb N$ one can define the space
\eq{ \Omega_{\text{alg},m}K  = \set{ \gamma \in\Omega_\text{alg} K: \gamma(z) = \sum_{k=-m}^m A_k z^k },}{eq:LoopFiltration}
which yields an $S^1 \times K$ equivariant filtration of $\Omega_\text{alg}K$
$$ K = \Omega_{\text{alg},0}K \subseteq \Omega_{\text{alg},1}K  \subseteq \Omega_{\text{alg},2}K  \subseteq \cdots \subseteq \Omega_\text{alg} K.$$
In general these $\Omega_{\text{alg},m}K$ are singular varieties, but come with the advantage that for any coweight $\lambda$ there is a sufficiently large $m$ such that $\overline{\mc B\lambda} \hookrightarrow \Omega_{\text{alg},m} K$ is an embedding of varieties \cite{Atiyah.Pressley1983,Mare2010}. The $\Omega_{\text{alg},m}K$ will act as an intermediary for embedding $\overline{\mc B\lambda}$ into a K\"ahler manifold.


For the moment, let $K$ be a centerfree compact group with Lie algebra $\mf k$.
Define 
$\mc H^{\mf k} = L^2(S^1,\mf k_\complex)$ to be those loops which are square integrable with respect to the Hermitian inner product $\inn xy_{\mf k_\complex}$ induced by the Killing form on $\mf k_\complex$.
The group $L_\text{alg} K_\complex$ acts naturally on $\mc H^{\mf k}$ via the adjoint action
$$ (\gamma\cdot f)(z) = \Ad_{\gamma(z)}f(z), \qquad \gamma \in L_\text{alg} K_\complex, f \in \mc H^{\mf k}.$$
If the complex dimension of $\mf k_\complex$ is $n$, choose a basis $\set{\epsilon_i}_{i=1}^n$ for $\mf k_\complex$ so that
$$\set{ \epsilon_i z^k: i\in\set{1,\ldots,n}, k \in \integ}$$
is a basis for $\mc H^{\mf k}$. 
We polarize $\mc H^{\mf k} = \mc H_+ \oplus \mc H_-$ by taking $\mc H_+ = \myspan_{k\geq 0} \set{\epsilon_iz^k}$ and setting $\mc H_-$ to be its orthogonal complement in the $L^2$ inner product.
The \emph{Grassmannian} of $\mc H^{\mf k}$ is
$$Gr(\mc H^{\mf k}) = \set{ W \subseteq \mc H^{\mf k}: \substack{ \pr_+: W \to \mc H_+ \text{ Fredholm} \\ \pr_-: W \to \mc H_- \text{ Hilbert-Schmidt}}}.$$
This is an infinite dimensional manifold modelled on the space of Hilbert-Schmidt operators $HS(\mc H_+,\mc H_-)$, 
and has a K\"ahler form at $T_{\mc H_+} Gr(\mc H^{\mf k})$ is given by
\eq{ \omega_{\text{HS}}(X,Y) = -i\tr(X^*Y-Y^*X), \qquad X,Y \in HS(\mc H_+,\mc H_-).}{eq:GrassmannSymplectic}

Analogous to \eqref{eq:LoopFiltration} we define the following subspace:
$$Gr_{0,m}(\mc H^{\mf k}) = \set{ W \in Gr(\mc H^{\mf k}): z^m \mc H_+ \subseteq W \subseteq z^{-m} \mc H_+}.$$
As each $Gr_{0,m}(\mc H^{\mf k})$ is homogeneous beyond the $z^{\pm m}$ basis component, one can quickly check that $Gr_{0,m}(\mc H^\mf k)$ is isomorphic to the finite dimensional Grassmannian of complex $nm$-dimensional subspaces of $\complex^{2nm}$ via the identification $W \mapsto W/z^m \mc H_+$. 
For the sake of notation, we will explicitly indicate when we are conceptualizing the finite dimensional Grassmannian by writing $\mc G_m:=Gr_\complex(nm,2nm)$.

The \emph{algebraic Grassmannian} $Gr_0(\mc H^\mf k)$ is the dense subspace of $Gr(\mc H^{\mf k})$ formed by the union 
$$Gr_0(\mc H^{\mf k}):=\bigcup_k Gr_{0,m}(\mc H^{\mf k})$$ 
and as such has a natural $\mathbb N$-indexed filtration given by the $\mc G_m$.
Additionally, we can define a $\complex^\times \times K_\complex$ action on $Gr_0(\mc H^\mf k)$ as follows: Let $W \in Gr_0(\mc H^\mf k)$,
\begin{enumerate}
	\item For $a \in K_\complex, a\cdot W = \set{ \Ad_a f(z) : f \in W}$,
	\item For $s \in \complex^\times, s \cdot W = \set{ f(sz): f \in W}$.
\end{enumerate}
It is easy to check that both are group actions, and in fact we recognize that the $K_\complex$-action is just the $L_\text{alg} K_\complex$ action on $\mc H^\mf k$ restricted to the constant maps.

If $\omega_{FS}'$ is the Fubini-Study form on $\mc G_m$, the following lemma ensures that the various symplectic structures are all mutually compatible.


\begin{lemma}[{\cite[Proposition 2.3]{HaradaHolmJeffreyMare2006}}]
	Let $M = {2nm\choose nm}$ and  $(\mathbb P^{M}, \omega_\text{FS})$ be projective space with the Fubini-Study form.
	If $(Gr_0(\mc H^\mf k), \omega_\text{HS})$ is the algebraic Grassmannian with the Hilbert-Schmidt form, and $p: \mc G_m \hookrightarrow \mathbb P^M$ is the Pl\"ucker embedding, then the restriction of $\omega_\text{HS}$ to $\mc G_m$ is precisely $\omega'_\text{FS} = p^*\omega_\text{FS}$.
\end{lemma}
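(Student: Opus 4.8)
The plan is to exploit the homogeneity of $\mc G_m = Gr_\complex(nm,2nm)$: I will show that both $\omega_\text{HS}|_{\mc G_m}$ and $p^*\omega_\text{FS}$ are invariant K\"ahler forms for one and the same transitive group action, deduce from Schur's lemma that they are proportional, and then fix the constant of proportionality by a single local computation. Concretely, write $\mc H^{(m)}_+ := \myspan\set{\epsilon_i z^k : 1\le i\le n,\ 0\le k\le m-1}$ and $\mc H^{(m)}_- := \myspan\set{\epsilon_i z^k : 1\le i\le n,\ -m\le k\le -1}$, so that $V_m := \mc H^{(m)}_+\oplus\mc H^{(m)}_-$ inherits a Hermitian inner product from $\inn\cdot\cdot_{\mf k_\complex}$ and the map $W\mapsto W/z^m\mc H_+$ identifies $\mc G_m$ with the Grassmannian of $nm$-dimensional subspaces of $V_m\cong\complex^{2nm}$, with base point the class of $\mc H_+$. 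Under this identification one checks that $T_{\mc H_+}\mc G_m = HS(\mc H^{(m)}_+, \mc H^{(m)}_-)$ and that $\omega_\text{HS}|_{\mc G_m}$ is given at the base point by formula \eqref{eq:GrassmannSymplectic}.

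The first real step is to verify invariance under $U(V_m)\cong U(2nm)$. For $\omega_\text{HS}|_{\mc G_m}$ this is immediate from homogeneity together with the conjugation-invariance of the trace in \eqref{eq:GrassmannSymplectic}. For $p^*\omega_\text{FS}$, the Pl\"ucker embedding $p\colon\mc G_m\hookrightarrow\mathbb P(\Lambda^{nm}V_m)$ is equivariant for the $U(2nm)$-action on $\Lambda^{nm}V_m$ induced by the standard representation, which is unitary for the induced Hermitian inner product; since $\omega_\text{FS}$ is invariant under the unitary group of the ambient Hermitian space, so is $p^*\omega_\text{FS}$. Now $\mc G_m$ is an irreducible Hermitian symmetric space (the case $m=0$, where $\mc G_0$ is a point, being trivial), so its isotropy representation at $\mc H_+$ is irreducible and the space of $U(2nm)$-invariant real $2$-forms on $\mc G_m$ is one-dimensional, spanned by the K\"ahler form of the (essentially unique) invariant Hermitian metric. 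Hence $\omega_\text{HS}|_{\mc G_m} = c\cdot p^*\omega_\text{FS}$ for a single real constant $c$, which is positive because both forms are positive.

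It remains to show $c=1$, which is where the actual work lies. I would restrict both forms to a Schubert line $\ell\cong\mathbb P^1\subseteq\mc G_m$, namely $\ell = \set{W : A\subseteq W\subseteq B}$ for fixed subspaces $A\subset B\subset V_m$ with $\dim A = nm-1$ and $\dim B = nm+1$; on $\ell$ both forms are rotation-invariant area forms, so it suffices to compare total areas. The Pl\"ucker map carries $\ell$ isomorphically onto a projective line in $\mathbb P(\Lambda^{nm}V_m)$, so $\int_\ell p^*\omega_\text{FS}$ equals the Fubini-Study area of a line, while $\int_\ell\omega_\text{HS}$ is a single-variable integral computed directly from \eqref{eq:GrassmannSymplectic}. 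The crux — and the step I expect to demand the most care — is to check that the restriction of $\omega_\text{HS}$ to such a line is exactly the Fubini-Study form on $\mathbb P^1$ with the same normalization used for $\omega_\text{FS}$, i.e. that the numerical constant in \eqref{eq:GrassmannSymplectic} is compatible with the chosen normalization of the Fubini-Study form; granting this, the two areas agree and $c=1$. (Alternatively one can bypass $\ell$ and compare $\omega_\text{HS}(X,iX)$ with $(p^*\omega_\text{FS})(X,iX)$ at $\mc H_+$ for the rank-one tangent vector $X$ sending $\epsilon_1\in\mc H^{(m)}_+$ to $\epsilon_1 z^{-1}\in\mc H^{(m)}_-$ and all other basis vectors to $0$, matching the trace in \eqref{eq:GrassmannSymplectic} against a one-variable evaluation of $\omega_\text{FS}$ in a Pl\"ucker affine chart.)
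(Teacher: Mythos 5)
Your symmetry argument is sound as far as it goes: both $\omega_\text{HS}|_{\mc G_m}$ and $p^*\omega_\text{FS}$ are invariant under the same transitive $U(2nm)$-action (for $\omega_\text{HS}$ this implicitly uses that the form at a general point $W$ is given by the trace formula on $HS(W,W^\perp)$, not just at $\mc H_+$ as stated in the paper, but that is the standard Pressley--Segal definition), and irreducibility of the isotropy representation does force proportionality. The problem is that this is only the soft half of the lemma: its entire quantitative content is that the constant is $1$, and that is exactly the step you do not carry out --- ``granting this, the two areas agree and $c=1$'' assumes the remaining assertion rather than proving it. Nor is it a formality, because this is precisely where the normalization conventions bite. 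Run your own suggested test: for the rank-one tangent vector $X$ at $\mc H_+$ with $X\epsilon_1=\epsilon_1 z^{-1}$ and $X=0$ on the orthogonal complement, \eqref{eq:GrassmannSymplectic} gives $\omega_\text{HS}(X,iX)=2\tr(X^*X)=2$, while the Pl\"ucker image of the corresponding curve is the coordinate line $w\mapsto[1:w:0:\cdots:0]$, on which the Fubini--Study form in the prequantum normalization the paper later fixes via $F_{\nabla_{\mc O(1)}}=-2\pi i\,\omega_\text{FS}$ (so a projective line has area $1$) evaluates to $(p^*\omega_\text{FS})(X,iX)=1/\pi$; that yields $c=2\pi$, not $1$. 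The asserted equality $c=1$ corresponds instead to the normalization $\omega_\text{FS}=i\partial\bar\partial\log\normm{\cdot}^2$ (line area $2\pi$). Reconciling the conventions and actually performing this one-point evaluation is the substance of the lemma, so as written your proposal has a genuine gap at its crux.

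Two further remarks. First, the paper gives no proof of this statement --- it is quoted from Harada--Holm--Jeffrey--Mare --- so there is no internal argument to compare routes with. Second, the Schur step buys you very little: since both forms are invariant under a transitive action, each is already determined by its value on $T_{\mc H_+}\mc G_m$, so the direct proof is simply to compare the two bilinear forms at that single point (e.g.\ on the rank-one maps $\epsilon_i z^j\mapsto\epsilon_{i'}z^{j'}$, which a Pl\"ucker affine-chart computation handles all at once); that is the same computation you postponed, with less machinery around it.
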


As alluded to earlier, these Grassmannians will serve as hosts for our embedding. Of particular interest will be the collection of $W \in Gr_0(\mc H^\mf k)$ which satisfy the following three properties:
\begin{enumerate}
	\item $zW \subseteq W$,
	\item $zW = \bar W^\perp$,
	\item If $W_\text{sm}$ consists of the smooth maps, then $W_{\text{sm}}$ is involutive under the Lie bracket on $\mf k$.
\end{enumerate}
We will denote the set of all such spaces as $Gr_0^\mf k$.
Intersecting with the filtration of $Gr_0(\mc H^\mf k)$ yields a filtration of $Gr_0^\mf k$ by the components $Gr_{0,m}^\mf k := \mc G_m \cap Gr_0^\mf k$. 
The following theorem tells us that the $\Omega_{\text{alg},m} K$ are precisely the $Gr_{0,m}^\mf k$ under an appropriate diffeomorphism.

\begin{theorem}\label{thm:PSDiff}
	The action of $L_\text{alg} K_\complex$ on $\mc H^{\mf k}$ extends to an action on $Gr_0(\mc H^{\mf k})$ which preserves $Gr_0^{\mf k}$.
	The map $\phi: \Omega_\text{alg}K \to Gr_0(\mc H^\mf k)$ given by $\gamma \mapsto \gamma \cdot \mc H_+$ defines an $S^1\times K$ equivariant symplectic embedding whose image is $Gr_0^\mf k$, and moreover $\phi(\Omega_{\text{alg},m}K) = Gr_{0,m}^\mf k$, preserving the filtration.
\end{theorem}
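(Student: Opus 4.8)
The plan is to assemble the statement from several pieces, most of which are essentially bookkeeping on top of the classical Pressley–Segal picture. The key geometric input is the observation that a based loop $\gamma \in \Omega_\text{alg} K$ acts on $\mc H^\mf k = L^2(S^1, \mf k_\complex)$ by the (algebraic) adjoint action, and that $\gamma \cdot \mc H_+$ lands in $Gr_0(\mc H^\mf k)$ precisely because $\gamma$ is algebraic, so $\gamma \cdot \mc H_+$ differs from $\mc H_+$ only in finitely many Fourier components; that both projections $\pr_\pm$ have the required Fredholm/Hilbert–Schmidt properties then follows from this finiteness. First I would verify that the $L_\text{alg} K_\complex$ action extends to $Gr_0(\mc H^\mf k)$: since the action on $\mc H^\mf k$ sends $z^m \mc H_+$ into $z^{m-N}\mc H_+$ for $N$ depending only on the ``width'' of the algebraic loop, the image of any $W \in Gr_{0,m}$ sits inside some $Gr_{0,m'}$, and the Grassmannian conditions are preserved. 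One then checks that this action restricts to $Gr_0^\mf k$ by verifying the three defining properties $zW \subseteq W$, $zW = \bar W^\perp$, and involutivity of $W_\text{sm}$ are each stable under $f \mapsto \Ad_{\gamma(z)} f(z)$; the first is immediate, the second uses that $\gamma(z) \in K$ on $|z| = 1$ so the adjoint action is unitary there (hence respects the $L^2$ orthogonal complement and commutes with complex conjugation on $\mf k$), and the third uses that $\Ad$ is a Lie algebra automorphism pointwise.

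Next I would establish that $\phi(\gamma) = \gamma \cdot \mc H_+$ is well-defined and injective. Well-definedness is the remark above; for injectivity, if $\gamma \cdot \mc H_+ = \gamma' \cdot \mc H_+$ then $\gamma'^{-1}\gamma$ stabilizes $\mc H_+$, and an element of $\Omega_\text{alg} K$ (based loop, valued in $K$ on the circle) stabilizing $\mc H_+$ under the adjoint action must be constant, hence $e$ by the basepoint condition — this is the based-loop analog of the standard fact for $Gr$. That the image is exactly $Gr_0^\mf k$ requires the converse: given $W$ satisfying the three properties, one reconstructs a loop. Here property (1) makes $W$ a module over $\complex[z]$ acting on a rank-$n$ trivial bundle, property (2) pins down the ``size'' so that $W$ corresponds to a point of the affine Grassmannian of $GL_n$ lying in the $\Omega_\text{alg} SU(n)$ locus, and property (3) together with unitarity cuts this down to $K_\complex$ and then to $K$; this is precisely the content of Pressley–Segal's identification of $\Omega_\text{alg} K$ with a subset of $Gr$, so I would cite \cite{Pressley.Segal1986} (and \cite{Mitchell1988}) rather than reprove it, and simply check that their conditions match properties (1)–(3) verbatim after unwinding definitions.

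Equivariance is a direct computation: for $k \in K$, $(k \cdot \gamma) \cdot \mc H_+ = k \cdot (\gamma \cdot \mc H_+)$ because the $K$-action on loops by conjugation intertwines with the constant-loop (i.e.\ $K_\complex$) action on $Gr_0(\mc H^\mf k)$, using $\Ad_{k\gamma(z)\gamma(s)^{-1}k^{-1}}$ versus $\Ad_k \circ \Ad_{\gamma(z)}$ and absorbing the $s$-dependent constant into the choice of representative of $\mc H_+$; similarly loop rotation by $s \in S^1$ corresponds to the $\complex^\times$-scaling $f(z) \mapsto f(sz)$. That $\phi$ is a symplectic embedding then follows by combining the Pressley–Segal result that $\phi$ pulls back $\omega_\text{HS}$ to $\omega_{\Omega K}$ with the compatibility Lemma above, and the filtration statement $\phi(\Omega_{\text{alg},m}K) = Gr_{0,m}^\mf k$ is a matter of matching the two definitions of ``width $\leq m$'': a loop $\gamma = \sum_{|k|\le m} A_k z^k$ has $\gamma \cdot \mc H_+$ squeezed between $z^m \mc H_+$ and $z^{-m}\mc H_+$, and conversely. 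The main obstacle I anticipate is not any single hard step but rather the care needed in the surjectivity-onto-$Gr_0^\mf k$ direction — precisely matching properties (1)–(3) to the classical conditions and ensuring the reconstructed algebraic map is genuinely $K$-valued (not just $K_\complex$-valued) and based; this is where I would lean most heavily on \cite{Pressley.Segal1986}.
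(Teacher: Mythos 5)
Your overall route coincides with the paper's: the extension of the $L_\text{alg}K_\complex$-action to $Gr_0(\mc H^\mf k)$, the preservation of $Gr_0^\mf k$, and the identification of the image of $\phi$ with $Gr_0^\mf k$ are delegated to \cite[Theorem 8.6.2]{Pressley.Segal1986} (the paper does exactly this, rather than re-deriving the reconstruction of a loop from properties (1)--(3)), while equivariance and the filtration statement are handled by the same direct computations you describe, using $\Ad_a\mc H_+=\mc H_+$ for constant $a$, the bijectivity of $f(z)\mapsto f(sz)$ on $\mc H_+$, and the fact that the action commutes with multiplication by $z$. Your extra observations (injectivity via the stabilizer of $\mc H_+$, the width-$m$ matching) are consistent with this, noting that the stabilizer argument requires $K$ centerfree, which is the standing assumption at this point and is relaxed afterwards by the covering remark.

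The one genuine gap is the word ``symplectic.'' You dispose of it by citing ``the Pressley--Segal result that $\phi$ pulls back $\omega_\text{HS}$ to $\omega_{\Omega K}$,'' together with the Harada--Holm--Jeffrey--Mare compatibility lemma; but the latter only compares $\omega_\text{HS}$ with the Fubini--Study form on $\mc G_m$ and says nothing about $\omega_{\Omega K}$, and no statement of the pullback identity with the paper's normalizations is available in \cite{Pressley.Segal1986}. The only published argument, \cite[Lemma 2.4]{Liu2003}, is a sketch containing an error, which is precisely why the paper proves the identity from scratch in the appendix (Lemma \ref{lemma:SymplecticEmbedding}): writing $X=\sum_k A_k e^{ik\theta}$, $Y=\sum_\ell B_\ell e^{i\ell\theta}$, one computes $\omega_{\Omega K}(X,Y)=i\sum_k k\tr(A_k^*B_k)$ and matches it against $\omega_\text{HS}(f_X,f_Y)=-i\tr(f_X^*f_Y-f_Y^*f_X)$ with $f_X=\pr_-\circ L_X$ in the chart about $\mc H_+$, first for $SU(n)$ and then for general (centerfree) $K$ via the embedding $\Omega_\text{alg}K\hookrightarrow\Omega_\text{alg}SU(n)$ induced by the adjoint representation. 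To make your proposal complete you would need to supply this computation (or an equivalent one); as written, the symplectic claim rests on a citation that does not exist in the form you need.
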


\begin{proof}
	This result is a combination of several other results, compactly stated as a single theorem for clarity. 
	The fact that the $L_\text{alg} K_\complex$ action extends to $Gr_0^{\mf k}$, preserves $Gr_0^\mf k$, and that $\phi$ is an embedding whose image is $Gr_0^\mf k$, is a simple adaptation of \cite[Theorem 8.6.2]{Pressley.Segal1986}. 
	
	The proof that $\phi$ is a symplectic embedding is sketched in \cite[Lemma 2.4]{Liu2003}, though there is an error in the result in that paper. 
	In particular, on page 2930, the correct expressions for the evaluation of the symplectic forms should be
	$$ \omega_{\text{AP}}(X,Y) = i\sum_k k \tr(X_k^* Y_k) = \Theta(f_X,f_Y),$$
	where our $\omega_{\Omega K}$ and $\omega_{\text{HS}}$ are the $\omega_\text{AP}$ and $\Theta$ of the paper.
	In light of this error, we give a full proof of the result in the appendix [Lemma \ref{lemma:SymplecticEmbedding}].
	
	Equivariance follows quickly once we recognize that if $a \in A_\complex$ then $\Ad_a \mc H_+ = \mc H_+$. 
	As $(a\cdot\gamma)(z) = a \gamma(z) \inv a$ one has
	\begin{align*}
		\phi(a\cdot \gamma) &= \Ad_{a\gamma\inv a}\mc H_+ = \Ad_a\circ \Ad_\gamma \circ\Ad_{\inv a} \mc H_+ \\
		&= \Ad_a \circ \Ad_\gamma \mc H_ + \\
		&= a \cdot \phi(\gamma). \\ 
	\end{align*}
	For $s \in S^1$, $(s\cdot\gamma)(z) = \gamma(sz)\gamma(s)\minv$. The map $f(z) \mapsto f(\inv s z)$ is bijective on $\mc H_+$, so that
	$$ 	\phi(s \cdot \gamma) = \Ad_{\gamma(sz)\gamma(\inv s)} \mc H_+  = \Ad_{\gamma(sz)} \mc H_+ = s \cdot \phi(\gamma).$$
	Both actions commute, so $\phi$ is equivariant as required. Finally, the $\complex^\times\times K_\complex$-action commutes with $z$-multiplication, leaving $z^m \mc H_+$ invariant and preserving the filtration.
\end{proof}

\begin{remark}
	The triviality of the centre of $K$ is essential only in the proof of \cite[Theorem 8.6.2]{Pressley.Segal1986}.
	In the case when $K$ is not centerfree, $\Ad(K) = K/Z(K)$ is a centerfree group and $K \to \Ad(K)$ is a finite covering.
	Hence $\Omega_\text{alg}K$ is a union of connected components of $\Omega_\text{alg}\!\Ad(K)$.
\end{remark}

\paragraph{\bf The Determinant Bundle:}
If $k<n$ and $Gr(k,n)$ represents the Grassmannian of complex $k$-planes in $\complex^n$, then $Gr(k,n)$ has a {tautological} rank $k$ vector bundle, $\gamma_{k,n} \to Gr(k,n)$ where
$$ \gamma_{k,n} = \set{ (W,w): W \in Gr(k,n), w \in W}.$$
Since this is a rank $k$ vector bundle, we may define a line bundle $\det^* \to Gr(k,n)$ by taking the $k$-th exterior power over each fibre.
More explicitly, if $\set{w_1,\ldots,w_k}$ is a basis for $W$, a typical fibre element over $W$ will be of the form $(W, \alpha w_1 \wedge \cdots \wedge w_k)$, which we write as $(W,[\alpha,w])$.
Naturally, choosing another basis should not change the structure of the fibre.
If $\set{w_1',\ldots,w_k'}$ is another basis and $C$ is the change of basis matrix from $w$ to $w'$, then 
$$w_1' \wedge \cdots \wedge w_k' = (\det C) w_1 \wedge \cdots \wedge w_k,$$ 
so we identify the elements $(W,[\alpha,w']) \sim (W,[\alpha(\det C), w])$.

We are more interested in the dual bundle $\det\to Gr(k,n)$. It is easy to check that $\det \to Gr(k,n)$ is the pullback of $\mc O(1) \to \mathbb P^{n\choose k}$ by the Pl\"ucker embedding $p: Gr(k,n) \to \mathbb P^{n\choose k}$. If $\nabla_{\mc O(1)}$ is the Chern connection on $\mc O(1)$, then $\nabla = p^*\nabla_{\mc O(1)}$ is a Chern connection on $\det$, and moreover the curvature satisfies
$$ F_{p^*\nabla_{\mc O(1)}} = p^* F_{\nabla_{\mc O(1)}} = -2i\pi p^* \omega_{FS} = -2i\pi \omega_{FS}',$$
which shows that $(\det,\nabla)$ is a positive prequantum line bundle. When applied to our $\mc G_m$, we shall denote the bundle by $\det_m \to \mc G_m$.

Visually, the following diagram summarizes the content of this section:

\centerline{\xymatrix{%
	& & \det_m \ar[d] \ar[r] & \mc O(1) \ar[d] \\
	\overline{\mc B\lambda} \ar@{^{(}->}[r] & \Omega_{\text{alg},m}K \ar@{^{(}->}[r] & \mc G_m \ar@{^{(}->}[r] & \mathbb P^{2nm \choose nm}
}}


\section{Non-abelian convexity} 
\label{sec:Non-abelian convexity}

Herein we examine the image of the moment map $\mu: \Omega K \to \real \oplus \mf k$ for the $S^1\times K$-action. It is well known that when the group is non-abelian, the image of the moment is not necessarily convex. Rather, Kirwan \cite{Kirwan1984} demonstrated that the correct analog lies in considering that portion of the moment map image which dwells within the positive Weyl chamber $\mf t^*_+$.  Several authors have generalized the aforementioned work, though our particular interest lies with the ``highest weight polytope'' approach of Brion \cite{Brion1987}, who showed that $K_\complex$-invariant subvarieties have convex image in $\mf t_+^*$. Guillemin and Sjamaar \cite{GuilleminSjamaar2006} extended Brion's work to those varieties invariant under just the action of the Borel subgroup $B$, and it is this result that we exploit here. The following is the main result of that paper, paraphrased to omit the structure of the highest weight polytope whose particular structure is not necessary for our result.

\begin{theorem}[{\cite[Theorem 2.1]{GuilleminSjamaar2006}}]\label{thm:GS}
	Let $M$ be a compact, complex manifold and $L \to M$ a positive Hermitian line bundle with Hermitian connection $\nabla$ and curvature form $\omega$. Let $K$ be a compact Lie group with complexification $K_\complex$ and $B\subseteq K_\complex$ a Borel subgroup. Assume that $K$ acts on $L$ by line bundle automorphisms, which preserve the complex structure on $M$ and Hermitian structure on $L$. If $X$ is a $B$-invariant irreducible closed analytic subvariety of $M$, then $\Delta (X) = \mu(X) \cap \mf t_+^*$ is a convex polytope, where $\mu: M \to \mf k^*$ is the moment map corresponding to $\omega$. 
\end{theorem}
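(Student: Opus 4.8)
The plan is to translate the analytic statement about $\mu(X)$ into a statement about the $T$-weights occurring in sections of powers of $L$ over $X$, and then to read off convexity and the polytope property from a single semigroup of such weights. First I would pass to an algebraic, projective model. Positivity of $(L,\nabla)$ makes $\omega$ a K\"ahler form, so by Kodaira's theorem $M$ is projective and some power $L^{\otimes r}$ is very ample; since replacing $L$ by $L^{\otimes r}$ only rescales $\mu$, and hence $\Delta(X)$, by $r$, I may assume $M\hookrightarrow \mathbb P(H^0(M,L)^*)$ is a $K$-equivariant projective embedding. A holomorphic $K$-action linearizing an ample bundle on a projective manifold extends uniquely to a holomorphic $K_\complex$-action, so every $H^0(M,L^k)$ is a rational $K_\complex$-module. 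Writing $B = T_\complex\ltimes U$ with $U$ the unipotent radical, the key structural consequence is that the $B$-invariant subvariety $X$ is in particular invariant under the complex torus $T_\complex$ and under $U$; thus $H^0(X,L^k)$ is a rational $B$-module, and $X$ is stable under the complexified gradient flows $t\mapsto\exp(it\xi)\cdot x$ of the functions $\inn{\mu}{\xi}$.

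The core is a dictionary identifying $\Delta(X)$ with intrinsic weight data on $X$. For a dominant weight $\lambda$ let $H^0(X,L^k)^U_\lambda$ denote the $U$-invariant sections of $T$-weight $\lambda$, and set
\[ \Gamma(X) = \set{ (\lambda,k) \in X^*(T)_+\times\mathbb N \ :\ H^0(X,L^k)^U_\lambda\neq 0}. \]
I claim $\Delta(X) = \overline{\set{\tfrac1k\lambda : (\lambda,k)\in\Gamma(X)}}$. To establish this for rational points I would adapt the Kempf--Ness correspondence to the $B$-action: a rational $\alpha=\lambda/k\in\mf t^*_+$ lies in $\mu(X)$ exactly when the Kempf--Ness functional associated to $\lambda$ attains its extremum on $X$, and because $X$ is $T_\complex$- and $U$-invariant this functional may be computed entirely within $X$ along the complexified flows noted above; its extremum is attained precisely when some $U$-invariant section of weight $\lambda$ is nonzero on $X$. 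The chamber condition is forced by the monotonicity of $\inn{\mu}{\xi}$ along these flows, and density of rational points together with compactness of $\mu(X)$ upgrade the equivalence to the asserted closure.

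Finally I would extract convexity and polytopality from $\Gamma(X)$. It is a semigroup, since the product of a weight-$\lambda$ eigensection of $L^k$ with a weight-$\eta$ eigensection of $L^{k'}$ is a weight-$(\lambda+\eta)$ eigensection of $L^{k+k'}$, nonzero because $X$ is irreducible and so $R(X)=\bigoplus_k H^0(X,L^k)$ is an integral domain. Were $\Gamma(X)$ finitely generated, $\overline{\set{\lambda/k}}$ would be the level-one slice of the rational cone $\real_{\geq 0}\,\Gamma(X)$, hence a rational convex polytope, giving both claims at once. The finite generation of $\Gamma(X)$, equivalently of the invariant subring $R(X)^U$, is the main obstacle. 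Since $U$ is non-reductive this is a genuine Hilbert's-fourteenth-problem question and does not follow from finite generation of $R(X)$ alone; moreover the tempting reductions are illusory, for one cannot replace $X$ by the $K_\complex$-sweep $\overline{K_\complex\cdot X}$: already a $B$-fixed point $p$ of a flag variety has $\Delta(\set{p})$ equal to a single vertex while $\Delta(\overline{K_\complex\cdot p})$ is the full moment polytope, so the intrinsic $B$-geometry of $X$ must be used. I would attempt the finite generation either through a Grosshans-type analysis tailored to $B$-subvarieties of the $K_\complex$-variety $M$, or bypass it entirely by running the Kirwan local-to-global convexity argument directly on the complex-analytic strata of $\mu|_X$, where $T_\complex$- and $U$-invariance again supply the crucial invariance of $X$ under the complexified flows; making either route rigorous is the crux of the argument.
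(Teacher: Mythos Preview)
The paper does not prove this theorem: it is quoted verbatim as \cite[Theorem 2.1]{GuilleminSjamaar2006} and used as a black box to deduce Theorem~\ref{thm:NonabelianConvexity}. There is therefore no proof in the paper to compare your proposal against; the paper's only contribution regarding this statement is to check, in the proof of Theorem~\ref{thm:NonabelianConvexity}, that its hypotheses are satisfied for $X=\overline{\mc B\lambda}$ inside $\mc G_m$ with the line bundle $\det_m$.

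That said, your sketch is broadly in the spirit of the actual Guillemin--Sjamaar argument, which the paper's introduction calls the ``highest weight polytope'' approach: one does identify $\Delta(X)$ with the set of slopes $\lambda/k$ arising from $U$-invariant weight vectors in $H^0(X,L^k)$, and convexity and rational polyhedrality come from a semigroup analysis. You have correctly isolated the genuine crux---finite generation of the $U$-invariant ring $R(X)^U$---and you are right that this does not follow from general nonsense about non-reductive invariants, nor can it be reduced to the $K_\complex$-sweep of $X$. Where your proposal remains incomplete is exactly here: you offer two possible routes (a Grosshans-type argument or a direct Kirwan-style local convexity argument) but carry neither through, and both require real work specific to the $B$-equivariant setting. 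So as a proof the proposal has a gap at precisely the step you yourself flag as ``the crux''; as a reading of what the cited theorem is about and how one would attack it, it is accurate.
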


Let $\mu: \Omega K \to \mf g \oplus \real$ be the moment map in \eqref{eq:MomentMap} and define $\Delta(X) = \mu(X) \cap \mf t^*_+$. By applying Theorem \ref{thm:GS} to the Bruhat cells $\mc B\lambda$ of $\Omega_\text{alg} K$, we deduce the following theorem:

\begin{theorem}\label{thm:NonabelianConvexity}
	$\Delta(\Omega_\text{alg}K)$ is convex.
\end{theorem}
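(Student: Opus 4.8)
The plan is to apply Theorem~\ref{thm:GS} to each Schubert variety $\overline{\mc B\lambda}$, producing a convex polytope $\Delta(\overline{\mc B\lambda})$, and then to glue these together using the Bruhat order on $X_*(T)$. First fix a coweight $\lambda$ and, using the filtration of Section~\ref{sec:Setup}, choose $m$ large enough that $\overline{\mc B\lambda}$ embeds as a closed subvariety of $\Omega_{\text{alg},m}K$; composing with the identifications $\Omega_{\text{alg},m}K\cong Gr_{0,m}^{\mf k}\hookrightarrow\mc G_m$ of Theorem~\ref{thm:PSDiff}, we may regard $\overline{\mc B\lambda}$ as a closed analytic subvariety of the compact complex manifold $M=\mc G_m$, irreducible because it is the closure of a single cell. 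I would then invoke Theorem~\ref{thm:GS} with $L=\det_m\to\mc G_m$ the positive prequantum line bundle constructed above, with the compact group $S^1\times K$ (complexification $\complex^\times\times K_\complex$, Borel $\complex^\times\times B$), and with $X=\overline{\mc B\lambda}$.

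Two hypotheses of Theorem~\ref{thm:GS} then require checking. First, $S^1\times K$ must act on $\det_m$ by line bundle automorphisms preserving the complex structure on $\mc G_m$ and the Hermitian structure on $\det_m$: this holds because $S^1\times K$ acts unitarily on $z^{-m}\mc H_+/z^m\mc H_+\cong\complex^{2nm}$ (by loop rotation $z^k\mapsto s^kz^k$ and by the adjoint action on coefficients), hence holomorphically and isometrically on $\mc G_m$ and its tautological bundle, and since this action is \emph{linear} on $\complex^{2nm}$ it lifts to $\det_m$ --- equivalently to $\mc O(1)$ under the Pl\"ucker embedding --- with no central extension, preserving the induced metric. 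Second, $\overline{\mc B\lambda}$ must be $(\complex^\times\times B)$-invariant: by \eqref{eq:BruhatClosure} it is $\mc B$-invariant, hence $B$-invariant since the constant loops valued in $B$ lie in the Iwahori $\mc B$; and it is preserved by loop rotation, because $\complex^\times$ normalizes $\mc B$ and fixes the point corresponding to $\lambda$ (the based rotation $s\cdot\lambda$ equals $\lambda$, as $\lambda$ is a homomorphism $\complex^\times\to T_\complex$). Theorem~\ref{thm:GS} thus yields that $\mu_{\mc G_m}(\overline{\mc B\lambda})\cap(\real\oplus\mf t_+^*)$ is a convex polytope, where $\mu_{\mc G_m}$ is the moment map for $\omega'_\text{FS}$. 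Because $\phi$ is an $S^1\times K$-equivariant symplectic embedding with $\omega_\text{HS}|_{\mc G_m}=\omega'_\text{FS}$ (Theorem~\ref{thm:PSDiff} together with the Pl\"ucker compatibility lemma above), the composite $\mu_{\mc G_m}\circ\phi$ agrees on $\Omega_{\text{alg},m}K$ with the moment map \eqref{eq:MomentMap} up to a locally constant element of $\real\oplus(\mf k^*)^K=\real$, which is immaterial to convexity (and vanishes for the normalization in \eqref{eq:MomentMap}). Hence $\Delta(\overline{\mc B\lambda})$ is a convex polytope for every coweight $\lambda$.

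It remains to glue. Since $\Omega_\text{alg}K=\bigsqcup_\lambda\mc B\lambda\subseteq\bigcup_\lambda\overline{\mc B\lambda}$ and each $\overline{\mc B\lambda}\subseteq\Omega_\text{alg}K$, we have $\Omega_\text{alg}K=\bigcup_\lambda\overline{\mc B\lambda}$, so
$$\Delta(\Omega_\text{alg}K)=\mu(\Omega_\text{alg}K)\cap\mf t_+^*=\bigcup_\lambda\Delta(\overline{\mc B\lambda}),$$
a union of convex polytopes. By \eqref{eq:BruhatClosure} the family $\{\overline{\mc B\lambda}\}$ is ordered by inclusion according to the Bruhat order on $X_*(T)$, and I claim this poset is upward directed. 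Indeed, every cell $\mc B\eta$ lies in the closure of the $L_\text{alg}^+K_\complex$-orbit through $\eta$, which is itself a Schubert variety indexed by the dominant coweight in the Weyl orbit of $\eta$, and these orbit closures are directed under inclusion: for dominant coweights $\lambda,\mu$ the sum $\lambda+\mu$ is dominant and dominates both, using that $K$ is simply connected, so $X_*(T)$ is the coroot lattice and a dominant element of it is a non-negative integral combination of simple coroots. A directed union of convex sets is convex --- given two of its points, both lie in a common member $\Delta(\overline{\mc B\nu})$, which contains the segment joining them --- so $\Delta(\Omega_\text{alg}K)$ is convex.

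The step I expect to be the crux is precisely this directedness of $\{\overline{\mc B\lambda}\}$ under inclusion. A union of convex polytopes need not be convex, and it is exactly the cofinality and directedness of the Bruhat poset, where the hypothesis that $K$ is simply connected is used, that makes the union convex. By comparison, verifying the line-bundle hypotheses of Theorem~\ref{thm:GS} and matching the moment maps under $\phi$ are routine, modulo bookkeeping of normalizations.
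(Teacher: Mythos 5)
Your proposal is correct and follows essentially the same route as the paper: embed each Schubert variety $\overline{\mc B\lambda}$ into $\mc G_m$ with the prequantum bundle $\det_m$, verify the linearization and invariance hypotheses of Theorem~\ref{thm:GS} to get convexity of $\Delta(\overline{\mc B\lambda})$, and conclude by directedness of the Schubert varieties under the Bruhat order. Your verifications (the lift of the $S^1\times K$-action to $\det_m$, the $\complex^\times$-invariance of $\overline{\mc B\lambda}$, the moment-map normalization, and the cofinality of dominant coweights) are in fact somewhat more detailed than the paper's, which asserts the directedness of $X_*(T)$ without elaboration.
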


\begin{proof}
	Fix some $\lambda \in X_*(T)$ and consider $X=\overline{\mc B\lambda} \subseteq \Omega_\text{alg} K$. Since the Iwahori subgroup $\mc B$ is the preimage of the Borel subgroup under the evaluation map, $\overline{\mc B\lambda}$ is clearly $B$-invariant. In Section \ref{sec:Setup} we showed that for sufficiently large $m \in \mathbb N$, there is an embedding $\overline{\mc B\lambda} \hookrightarrow \Omega_{\text{alg},m} K \hookrightarrow \mc G_m$, and a positive prequantum line bundle $\det_m \to \mc G_m$, whose curvature form is the K\"ahler form $\omega_{FS}'$. 
	
To define a $\complex^\times \times K_\complex$ linearization on $\det_m$, we will first define one on $\det_m^*$. 
Let $W \in \mc G_m$ and choose a homogeneous basis $\set{w_i}$ for $W$. Elements of $\det_m^*$ look like $(W,[\alpha, w])$ where $w=w_1\wedge\cdots \wedge w_n$ and $\alpha \in \complex$. 
Let $s \in \complex^\times$ and let $s$ act on each $w_i$ by loop rotation, so that $s \cdot w_i = s^{k_i} w_i$, for some $k_i \in \integ$. This action is diagonal in this basis, and the induced action of $\complex^\times$ on $\Lambda^{\text{top}} W$ is given by
$$ s \cdot (W,[\alpha, w_1\wedge\cdots\wedge w_n]) = (s\cdot W, [\alpha, w_1(sz) \wedge \cdots \wedge w_n(sz)]) = (s\cdot W, [\alpha s^k, w]),$$
where $k=k_1+\cdots+k_n$.
Similarly, if $k \in K_\complex$ then $k \cdot w_i = kw_i$ is certainly linear. Moreover, since $K_\complex$ is connected and semisimple, $\det(k) =1$:
$$ k\cdot (W,[\alpha, w]) = (k\cdot W, [\alpha, (kw_1)\wedge\cdots \wedge (kw_n)]) = (k\cdot W, [\alpha, w]).$$
These actions commute and hence define a $\complex^\times \times K_\complex$ action on $\det_m^*$ which commutes with the projection map. 

The $K_\complex$ action trivially preserves the Hermitian structure on $\det_m$. The $\complex^\times$ action is also Hermitian, since
$$ \inn{s \cdot[\alpha_1, w]}{s\cdot [\alpha_2,w]} = \inn{[s^k\alpha_1,w]}{[s^k\alpha_2,w]} = s^k s^{-k} \inn{[\alpha_1,w]}{[\alpha_2,w]} = \inn{[\alpha_1,w]}{[\alpha_2,w]} .$$

By passing to the dual, one derives an action of $\complex^\times \times K_\complex$ on $\det_m$. Since the Hermitian structure on $\det_m$ is just the dual of that on $\det_m^*$, one also concludes that the Hermitian structure on $\det_m$ is $\complex^\times \times K_\complex$-invariant. Furthermore, the $\complex^\times \times K_\complex$-action acts holomorphically on $\mc G_m$. Since $S^1 \times K$ includes into $\complex^\times \times K_\complex$ as its compact real form, we conclude that the corresponding real group action is Hermitian and preserves the complex structure. Theorem \ref{thm:GS} thus implies that $\Delta(\overline{\mc B\lambda})$ is convex.

To extend from each Schubert variety to the whole algebraic based loop group, choose any two points $x_1,x_2 \in \Delta(\Omega_\text{alg}K)$ and take $p_i \in \Omega_\text{alg} K$ such that $\mu(p_i) = x_i$. There exist $\lambda_1,\lambda_2 \in X_*(T)$ such that $p_i \in \mc B\lambda_i$, and since $X_*(T)$ is directed, a $\lambda$ such that both $\mc B\lambda_1 \subseteq \mc B\lambda$ and $\mc B\lambda_2 \subseteq \mc B\lambda$. Consequently, both $p_1,p_2 \in \overline{\mc B\lambda}$ whose image is convex, as required.
\end{proof}

\begin{corollary}\label{cor:H1Convexity}
	The image $\Delta(\Omega K)$ is also convex.
\end{corollary}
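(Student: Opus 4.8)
The plan is to deduce Corollary \ref{cor:H1Convexity} from Theorem \ref{thm:NonabelianConvexity} by a density argument, exploiting the fact that $\Omega_\text{alg}K$ is dense in $\Omega K$ and that the moment map $\mu$ is continuous. The key point is that passing from the dense subgroup to the full Hilbert manifold can only change the moment image by taking a closure, and the closure of a convex set is convex. So the heart of the matter is to show that $\Delta(\Omega K) = \overline{\Delta(\Omega_\text{alg}K)}$, or at least that $\Delta(\Omega K)$ is squeezed between $\Delta(\Omega_\text{alg}K)$ and its closure.

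First I would record the easy inclusion $\Delta(\Omega_\text{alg}K) \subseteq \Delta(\Omega K)$, which is immediate from $\Omega_\text{alg}K \subseteq \Omega K$. For the reverse containment up to closure, take any $\gamma \in \Omega K$ with $\mu(\gamma) \in \mf t_+^*$ (after conjugating by an element of $K$ we may assume $\mu(\gamma)$ lies in the positive Weyl chamber, since $\mu$ is $K$-equivariant and every coadjoint orbit meets $\mf t_+^*$). By density of $\Omega_\text{alg}K$ in $\Omega K$ with respect to the $H^1$ topology, choose a sequence $\gamma_j \in \Omega_\text{alg}K$ with $\gamma_j \to \gamma$. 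Continuity of $\mu$ on $\Omega K$ — which follows from the integral formula \eqref{eq:MomentMap}, since $\gamma \mapsto \gamma\minv\gamma'$ is continuous $H^1 \to L^2$ — gives $\mu(\gamma_j) \to \mu(\gamma)$. The points $\mu(\gamma_j)$ need not lie in $\mf t_+^*$, but each lies in the $K$-orbit of a unique point $x_j \in \mf t_+^*$, and the map sending a coadjoint orbit to its intersection with $\mf t_+^*$ (equivalently, $\mf k^* \to \mf t_+^*$, the "sweep" map) is continuous; hence $x_j = \mu_{\mf t_+^*}(\gamma_j) \in \Delta(\Omega_\text{alg}K)$ converges to $\mu(\gamma) \in \mf t_+^*$. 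Therefore $\mu(\gamma) \in \overline{\Delta(\Omega_\text{alg}K)}$, establishing $\Delta(\Omega K) \subseteq \overline{\Delta(\Omega_\text{alg}K)}$.

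Chaining the two inclusions gives $\Delta(\Omega_\text{alg}K) \subseteq \Delta(\Omega K) \subseteq \overline{\Delta(\Omega_\text{alg}K)}$. Since $\Delta(\Omega_\text{alg}K)$ is convex by Theorem \ref{thm:NonabelianConvexity}, its closure is convex, and a set sandwiched between a convex set and its closure is itself convex (any such set has the same closure, and convexity of the closure plus the sandwich forces convexity of the intermediate set — concretely, given two of its points, the segment between them lies in the convex closure, and one approximates interior segment points back into $\Delta(\Omega K)$; alternatively one simply notes $\overline{\Delta(\Omega K)} = \overline{\Delta(\Omega_\text{alg}K)}$ is convex and $\Delta(\Omega K)$ is the moment image of a connected space, so the standard sandwich lemma applies). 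Hence $\Delta(\Omega K)$ is convex.

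The step I expect to require the most care is the continuity of the "sweep" to the Weyl chamber, i.e. justifying that $\mu(\gamma_j) \to \mu(\gamma)$ in $\mf k^*$ implies the corresponding chamber representatives converge in $\mf t_+^*$; this is a standard fact about the quotient $\mf k^*/K \cong \mf t_+^*$ being a homeomorphism onto a closed convex cone, but it should be cited or briefly argued rather than assumed. The only other subtlety is confirming that $\mu$ is genuinely continuous on all of $\Omega K$ in the $H^1$ topology — this is where the Sobolev $H^1$ setup does its work, since $\gamma \mapsto \gamma\minv\gamma'$ maps $H^1$ loops continuously into $L^2$, making the integrals in \eqref{eq:MomentMap} continuous functions of $\gamma$. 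Neither obstacle is serious; the argument is genuinely a soft density-plus-closure argument, which is why the statement is labeled a corollary.
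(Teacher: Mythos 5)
Your overall route is the same as the paper's: density of $\Omega_\text{alg}K$ in $\Omega K$, continuity of $\mu$ in the $H^1$ topology, Theorem \ref{thm:NonabelianConvexity}, and the fact that closures of convex sets are convex. Your careful handling of the chamber intersection via the continuous sweep map $\mf k^* \to \mf t_+^*$ (together with $K$-equivariance of $\mu$ and $K$-invariance of $\Omega_\text{alg}K$) is a legitimate, and in fact more detailed, justification of the inclusion $\Delta(\Omega K)\subseteq \overline{\Delta(\Omega_\text{alg}K)}$ than the paper offers.

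The final step, however, has a genuine gap. The principle ``a set sandwiched between a convex set and its closure is itself convex'' is false: take $C=(0,1)^2$ and $S=C\cup\{(0,0),(1,0)\}$; then $C\subseteq S\subseteq\overline C$ and $C$ is convex, yet the segment joining $(0,0)$ to $(1,0)$ meets $S$ only in its endpoints. Your sketched repair---approximating points of the segment by points of $\Delta(\Omega K)$---only places the segment in $\overline{\Delta(\Omega K)}$, not in $\Delta(\Omega K)$, and the appeal to a ``standard sandwich lemma'' for moment images of connected spaces is not available off the shelf: local-to-global convexity arguments of that type are precisely what has not been established in this infinite-dimensional, singular setting. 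The paper's proof instead asserts the equality $\overline{\Delta(\Omega_\text{alg}K)}=\Delta(\Omega K)$, so that $\Delta(\Omega K)$ is literally the closure of a convex set and convexity is immediate. To finish along those lines you would need the reverse inclusion $\overline{\Delta(\Omega_\text{alg}K)}\subseteq\Delta(\Omega K)$, i.e.\ that every limit of chamber values of algebraic loops is attained by an $H^1$ loop, which you neither state nor prove; the one-sided sandwich you do establish does not by itself yield convexity.
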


\begin{proof}
	The algebraic based loops are dense in $\Omega K$ \cite{Atiyah.Pressley1983}. Since the moment map is continuous, $\mu(\Omega_\text{alg} K)$ is dense in $\mu(\Omega K)$, and so $\overline{\Delta(\Omega_\text{alg} K)} = \Delta(\Omega K)$. Since the closure of a convex set is convex, the result follows.
\end{proof}


\section{Duistermaat-type Convexity} 
\label{sec:Duistermaat-type Convexity}

We now turn our attention to the real locus of the based loop group; a discussion which necessitates introducing an anti-symplectic involution on $\Omega K$. Assume that $K$ is equipped with an involutive group automorphism $\sigma: K \to K$ such that if $T \subseteq K$ is a maximal torus, then $\sigma(t) = \inv t$ for every $t \in T$. Such an involution is guaranteed to exist by \cite{LoosBook}. The differential $d_e\sigma: \mf k \to \mf k$ is thus involutive as well and induces the eigenspace decomposition $\mf k = \mf p \oplus \mf q$, where $\mf p$ and $\mf q$ are the $\pm1$-eigenspaces respectively. Extend $d_e\sigma$ to $\mf k_\complex$ in an anti-holomorphic fashion by setting
\eq{ \hat \sigma: \mf k_\complex \to \mf k_\complex, \qquad X+iY \mapsto d_e\sigma(X) -i d_e\sigma(Y).}{eq:AntiholomorphicExtension}
We define an involutive group automorphism $\tau: \Omega K \to \Omega K$ by $(\tau \gamma)(z) = \sigma(\gamma(\bar z))$ for $\gamma \in \Omega K$.
One can easily check that $\tau$ leaves $\Omega_\text{alg} K$ invariant, and so also defines an involution there. It is straightforward to check that $\tau$ also preserves the filtration $\Omega_{\text{alg},m} K$. We will often conflate $\tau$ with the corresponding $\integ_2$ action it induces on $\Omega K$.

The $\tau$ fixed points of $\Omega K$ are denoted $\Omega K^\tau$, and are often referred to as the \emph{real locus} of $\Omega K$. The nomenclature is derivative of the finite dimensional regime, where our manifold is a complex variety and $\tau$ is complex conjugation. In \cite{JeffreyMare2010}, Jeffrey and Mare showed that $\mu_T(\Omega K) = \mu_T(\Omega K^\tau)$; an analog of Duistermaat's convexity theorem in finite dimensions \cite{Duistermaat1983}. In the former paper, the question of whether $\mu_T(\Omega_\text{alg} K) =\mu_T(\Omega_\text{alg} K^\tau)$ was proposed. Our main theorem in this section will answer the more general non-abelian case in the affirmative, and give the result for the full class of Sobolev $H^1$-loops as an immediate corollary.

Once again, our strategy will be to demonstrate the result on the Schubert varieties $\overline{\mc B\lambda}$. Guillemin and Sjamaar \cite{GuilleminSjamaar2006} generalize Duistermaat's convexity result in finite dimensions to the non-abelian regime for singular varieties. Goldberg \cite{Goldberg2009} in turn combined this result with that of \cite{OShea.Sjamaar2000} to derive the same result for singular $B$-invariant varieties.

\begin{definition}[{\cite{GuilleminSjamaar2006}}]
	Let $U$ be a compact group with an involutive automorphism $\tilde\sigma: U \to U$. A \emph{Hamiltonian $(U,\tilde\sigma)$-manifold} is a quadruple $(M,\omega,\tau,\mu)$ such that $(M,\omega)$ is a smooth symplectic manifold, endowed with a Hamiltonian $U$-action for which $\mu: M \to \mf u^*$ is the moment map. In addition, $\tau$ is an anti-symplectic involution, compatible with the $U$-action as follows:
\begin{enumerate}
	\item $\mu(\tau(m)) = -(d_e\tilde\sigma)^*(\mu(m))$,
	\item $\tau(u\cdot m) = \tilde\sigma(u) \tau(m)$.
\end{enumerate}
	A \emph{$(U,\tilde\sigma)$-pair} $(X,Y) \subseteq M \times M^\tau$ is a pair such that 
	\begin{enumerate}
		\item $X$ is a $U$- and $\tau$-stable irreducible closed complex subvariety of $M$,
		\item $X_\text{reg} \cap M^\tau \neq \emptyset$, where $X_\text{reg}$ are the regular points of $X$, 
		\item $Y$ is the closure of a connected component of $X_\text{reg} \cap M^\tau$.
	\end{enumerate}
\end{definition}

\begin{theorem}[{\cite[Theorem 1.7]{Goldberg2009}}]\label{thm:Goldberg}
	Let $(M,\omega, \tau, \mu)$ be a compact, connected, $(U,\tilde\sigma)$-manifold, equipped with a complex structure compatible with $\omega$, relative to which $\tau$ is anti-holomorphic. Assume that the Borel subgroup $B \subseteq K_\complex$ is preserved by the anti-holomorphic extension of $\tilde \sigma$, and take $\mf u = \mf p \oplus \mf q$ to be the $\pm 1$-eigenspace decomposition of $\mf u$ with respect to $d_e\tilde\sigma$. 
Let $L \to M$ be a holomorphic line bundle with connection $\nabla$ such that the curvature form $F_\nabla = -2\pi i \omega$,  and assume $\tau$ lifts to an involutive, anti-holomorphic bundle map $\tau_L$ which preserves $\nabla$.
If $(X,Y)$ is a $(U,\tilde\sigma)$-pair such that $X$ is $B$-invariant, $\tau$-invariant, and $X^\tau$ is non-empty, then 
\eq{\Delta (Y) = \Delta(X) \cap \mf m^*}{eq:NonAbelianConvexity}
where $\mf m^* \subseteq \mf q^*$ is a maximal abelian subspace of $\mf q^*$.
\end{theorem}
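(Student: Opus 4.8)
The plan is to prove the equality \eqref{eq:NonAbelianConvexity} by establishing its two inclusions separately; the first is essentially formal, while the second carries all of the analytic content and is where singularities and the choice of a single component $Y$ intervene.

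\textbf{The inclusion $\Delta(Y) \subseteq \Delta(X) \cap \mf m^*$.} Since $Y \subseteq X$ we have $\mu(Y) \subseteq \mu(X)$ and hence $\Delta(Y) \subseteq \Delta(X)$. For the second factor, write $\mf u = \mf p \oplus \mf q$ for the $\pm 1$-eigenspace decomposition of $d_e\tilde\sigma$, so that dually $(d_e\tilde\sigma)^*$ has $-1$-eigenspace $\mf q^*$. If $y \in Y \subseteq M^\tau$ then $\tau(y) = y$, so the first compatibility condition gives $\mu(y) = -(d_e\tilde\sigma)^*(\mu(y))$, forcing $\mu(y) \in \mf q^*$. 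Thus $\mu(M^\tau) \subseteq \mf q^*$, and choosing $T$ adapted to $\tilde\sigma$ so that $\mf m^* = \mf t^* \cap \mf q^*$, intersection with the positive chamber yields $\Delta(Y) \subseteq \mf t_+^* \cap \mf q^* \subseteq \mf m^*$. This step uses only the axioms of a $(U,\tilde\sigma)$-pair and no convexity.

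\textbf{The inclusion $\Delta(X) \cap \mf m^* \subseteq \Delta(Y)$.} This is the substantive direction; by Theorem~\ref{thm:GS} the set $\Delta(X)$ is already a convex polytope, so it remains to realize each of its points in $\mf m^*$ by a real point of the prescribed component $Y$. My plan is to remove the singularities of $X$ without disturbing the real structure and then invoke the known smooth theorem. Because $L$ is positive, $M$ and $X$ are projective, and functorial (hence equivariant) resolution of singularities produces a proper map $\pi\colon \tilde X \to X$ from a smooth projective variety that is $U$-equivariant and, since $\tau_L$ realizes $\tau$ as an anti-holomorphic automorphism of the pair consisting of $M$ and $L$, may be taken $\tau$-equivariant as well. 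Pulling back $\omega$ and adding a small multiple $\varepsilon$ of a $U$- and $\tau$-invariant Kähler form turns $\tilde X$ into a genuine compact Hamiltonian $(U,\tilde\sigma)$-manifold $(\tilde X,\omega_\varepsilon,\tilde\tau,\mu_\varepsilon)$ with $\Delta_\varepsilon(\tilde X) \to \Delta(X)$ as $\varepsilon \to 0$. On the smooth manifold $\tilde X$ the desired statement is exactly the non-abelian real convexity of O'Shea and Sjamaar \cite{OShea.Sjamaar2000}, whose proof passes through the $\tilde\sigma$-equivariant symplectic cross-section to reduce to Duistermaat's abelian theorem \cite{Duistermaat1983} on the principal slice; it gives $\Delta(\tilde X^{\tilde\tau}) = \Delta_\varepsilon(\tilde X) \cap \mf m^*$, together with the stronger statement that each connected component of $\tilde X^{\tilde\tau}$ already surjects onto this slice. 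Since $\pi$ intertwines $\mu_\varepsilon$ with $\mu$ up to the $O(\varepsilon)$ perturbation and carries the real locus of $\tilde X$ onto $X^\tau$, letting $\varepsilon \to 0$ transports both the equality and the component-wise surjectivity down to $X$, identifying $\Delta(X)\cap\mf m^*$ with the image of the component of $X_{\text{reg}} \cap M^\tau$ whose closure is $Y$.

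\textbf{Main obstacle.} The principal difficulty is not the convexity but the bookkeeping of connected components through the resolution: I must guarantee that the distinguished component of $X_{\text{reg}} \cap M^\tau$ defining $Y$ is the image of a component of $\tilde X^{\tilde\tau}$ that surjects onto the slice, i.e.\ that passing to $\tilde X$ neither merges nor separates real components in a way that destroys surjectivity. This requires controlling $\tilde X^{\tilde\tau}$ over both the singular locus of $X$ and the exceptional divisor of $\pi$, and verifying that the $O(\varepsilon)$ perturbation of the symplectic form does not alter the set of achieved chamber values in the limit. The $\integ_2$-Morse theory underlying Duistermaat's connectivity — applied to the functions $\langle \mu_\varepsilon, \xi\rangle$ on $\tilde X^{\tilde\tau}$, whose critical sets are the real loci of the even-index critical submanifolds upstairs — is robust, but matching its output to the prescribed $Y$ downstairs, where the hypotheses $X_{\text{reg}}\cap M^\tau \neq \emptyset$ and the $\tau$-invariance of $X$ become indispensable, is the step I expect to demand the most care.
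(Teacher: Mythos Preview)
This theorem is not proved in the paper: it is quoted verbatim from \cite[Theorem 1.7]{Goldberg2009} and used as a black box (see the sentence preceding the statement, and the proof of Proposition~\ref{prop:CellConvexity} which simply invokes it). There is therefore no proof in the paper to compare your proposal against.

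That said, your sketch is a reasonable outline of how such a result is typically established, and indeed Goldberg's argument does proceed by combining the singular/$B$-invariant machinery of \cite{GuilleminSjamaar2006} with the real convexity theorem of \cite{OShea.Sjamaar2000}, much as you describe. One point deserving more care than you give it: you claim the resolution $\pi\colon \tilde X \to X$ ``may be taken $\tau$-equivariant'' because $\tau$ acts on the pair $(M,L)$. Functorial resolution is equivariant for \emph{holomorphic} automorphisms, but $\tau$ is anti-holomorphic; one needs the slightly different observation that functorial resolution commutes with complex conjugation of the variety (i.e.\ with passing to the conjugate complex structure), so that $\tau$ lifts as an anti-holomorphic involution of $\tilde X$. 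You correctly identify the component-tracking as the delicate step; this is exactly where the hypothesis that $X_{\mathrm{reg}} \cap M^\tau \neq \emptyset$ is used, since over the regular locus $\pi$ is an isomorphism and real components downstairs correspond bijectively to real components of $\pi^{-1}(X_{\mathrm{reg}})$ upstairs.
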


The Schubert varieties $\overline{\mc B\lambda}$ will play the role of $X$, while the Grassmannian $\mc G_m$ will play that of $M$. To begin, we convince ourselves that $\overline{\mc B\lambda}$ is a candidate for $X$, by showing that it is preserved by $\tau$ and that the embedding $\overline{\mc B\lambda} \hookrightarrow \mc G_m$ is $\integ_2$-equivariant. The following lemma is partially sketched in \cite[Theorem 5.9]{Mitchell1988}, and was communicated to us by Mare.

\begin{proposition}\label{prop:BruhatTau}
	Each Schubert variety $\overline{\mc B\lambda}$ is invariant under $\tau$. 
\end{proposition}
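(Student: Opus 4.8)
The plan is to reduce the statement to a claim about the individual Bruhat cells $\mc B\lambda$ and the Bruhat order, since the closure formula \eqref{eq:BruhatClosure} expresses $\overline{\mc B\lambda}$ as a union of cells. First I would establish that $\tau$ permutes the cells $\mc B\lambda$ of the Bruhat decomposition \eqref{eq:BruhatDecomposition}. The key observation is that the $T$-fixed points of $\Omega_\text{alg}K$ are exactly the coweights $\lambda \in X_*(T)$ (viewed as the homomorphisms $z\mapsto z^\lambda$), and that $\tau$ sends the fixed point $\lambda$ to $\sigma(z^\lambda) = z^{-\lambda}$ (using $\sigma(t)=t^{-1}$ on $T$), hence $\tau$ acts on $X_*(T)$ by $\lambda \mapsto -\lambda$. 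Since each cell $\mc B\lambda$ is characterized as the set of points in $\Omega_\text{alg}K$ flowing to the $T$-fixed point $\lambda$ under a suitable one-parameter subgroup (or equivalently as the $\mc B$-orbit through $\lambda$), and $\tau$ is a homeomorphism conjugating this structure appropriately, we get $\tau(\mc B\lambda) = \mc B(-\lambda)$, or more precisely $\tau(\mc B\lambda) = \tau(\mc B)\tau(\lambda) = \mc B'(-\lambda)$ where $\mc B'$ is the Iwahori subgroup associated to the Borel $\hat\sigma(B)$.

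Here is the crux: $\overline{\mc B\lambda}$ is invariant under $\tau$, not under the identification $\lambda\mapsto -\lambda$ on cells — so the naive statement ``$\tau$ permutes cells'' cannot be the whole story. The resolution is that $\overline{\mc B\lambda}$ is $\mc B$-invariant and also invariant under the ``opposite'' Iwahori (or under the finite Weyl group action enlarging $\mc B$ appropriately); the relevant fact from \cite{Mitchell1988} is that $\overline{\mc B\lambda}$, being a finite-dimensional projective subvariety stable under the full positive loop group and under enough of the affine Weyl group, is characterized intrinsically — e.g.\ as $\{\gamma : \gamma \cdot \mc H_+ \subseteq z^{-\langle\lambda,\text{ht}\rangle}\mc H_+, \ldots\}$ in the Grassmannian model, a condition manifestly symmetric under the symmetry $W \mapsto \bar W^\perp$ that $\tau$ induces. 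So the real plan is: (i) transport the question to $Gr_0^{\mf k}$ via the embedding $\phi$ of Theorem~\ref{thm:PSDiff}, checking $\phi$ is $\integ_2$-equivariant for the conjugation $W \mapsto \hat\sigma(\bar W)$ (this uses that $\hat\sigma$ extends $d_e\sigma$ anti-holomorphically and $\tau\gamma(z)=\sigma(\gamma(\bar z))$); (ii) identify $\overline{\mc B\lambda}$ inside $Gr_0^{\mf k}$ with a closed subvariety cut out by containment conditions $z^{a}\mc H_+ \subseteq W \subseteq z^{-b}\mc H_+$ together with the defining conditions $zW\subseteq W$, $zW = \bar W^\perp$; (iii) observe that these conditions are preserved by the involution, because $\hat\sigma$ preserves the grading of $\mc H^{\mf k}$ by powers of $z$ up to the flip $z\mapsto \bar z$, so the sandwich condition is symmetric, and the isotropy/involutivity conditions are visibly $\hat\sigma$-stable.

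The step I expect to be the main obstacle is (ii)–(iii): pinning down an intrinsic, manifestly $\tau$-symmetric description of $\overline{\mc B\lambda}$ rather than its tautological description as a union of cells. One needs the precise result of \cite[Theorem~5.9]{Mitchell1988} that the Schubert variety $\overline{\mc B\lambda}$ depends, as a subset, only on the $\mc B$-orbit structure in a way compatible with the symmetry $\lambda \leftrightarrow -\lambda$; concretely, that $\tau(\overline{\mc B\lambda})$, being an irreducible projective subvariety of the same dimension containing the $T$-fixed point $-\lambda$ and stable under the opposite Iwahori, must coincide with $\overline{\mc B\lambda}$ — this rests on the fact that Schubert varieties for $\mc B$ and for the opposite Iwahori through antipodal coweights agree, a consequence of the self-duality of the affine Grassmannian under the Cartan involution. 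Once this identification is in hand, $\tau$-invariance is immediate. A secondary, more routine obstacle is the bookkeeping in step (i): verifying carefully that $\gamma \mapsto \tau\gamma$ corresponds under $\phi$ to $W \mapsto \hat\sigma(\overline W)$ on $Gr_0(\mc H^{\mf k})$ and that this map indeed preserves $Gr_0^{\mf k}$ (properties (1)--(3) in the definition of $Gr_0^{\mf k}$), which follows since $\hat\sigma$ is a Lie algebra automorphism intertwining the bracket and the Hermitian form up to conjugation.
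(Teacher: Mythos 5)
Your starting computation contains a sign error that derails the whole strategy. With $(\tau\gamma)(z)=\sigma(\gamma(\bar z))$ and a coweight $\lambda:S^1\to T$, one has $\lambda(\bar z)=\lambda(z^{-1})=\lambda(z)^{-1}$ on the circle, and then $\sigma$ inverts again, so $\tau(\lambda)=\lambda$: the coweight homomorphisms are \emph{fixed} by $\tau$, not sent to $-\lambda$. The same double cancellation occurs at the level of the Borel: since $\hat\sigma$ is anti-linear and acts as $-\id$ on $\mf t$, for $X\in\mf k^\complex_\alpha$ one gets $[H,\hat\sigma(X)]=\overline{\alpha(H)}\,(-1)^{\,}\cdots=\alpha(H)\hat\sigma(X)$ because $\alpha(H)\in i\real$, so $\hat\sigma$ preserves each root space and hence $\mf b$ itself; consequently $\tau$ preserves $\mathrm{Lie}(\mc B)=\{\sum_{k\ge0}A_kz^k:A_0\in\mf b\}$ and therefore $\mc B$. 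Once these two facts are in hand, $\tau(\mc B\lambda)=\tau(\mc B)\tau(\lambda)=\mc B\lambda$ for every cell, and invariance of $\overline{\mc B\lambda}=\bigsqcup_{\eta\le\lambda}\mc B\eta$ follows from the closure formula. Your proposal never verifies that $\hat\sigma(B)=B$ (you explicitly hedge with an ``opposite'' Iwahori $\mc B'$), and the purported problem that ``$\tau$ permutes cells via $\lambda\mapsto-\lambda$'' does not exist.

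The workaround you build on top of this phantom problem is also not viable as stated. The Grassmannian-side conditions you propose in steps (ii)--(iii) --- sandwich conditions $z^a\mc H_+\subseteq W\subseteq z^{-b}\mc H_+$ together with $zW\subseteq W$ and $zW=\bar W^\perp$ --- cut out the filtration pieces $Gr^{\mf k}_{0,m}$ (i.e.\ the images of $\Omega_{\text{alg},m}K$), not individual Schubert varieties, so they cannot serve as a ``manifestly $\tau$-symmetric intrinsic description'' of $\overline{\mc B\lambda}$; and the appeal to ``self-duality of the affine Grassmannian under the Cartan involution'' is left entirely unproved. The $\integ_2$-equivariance of $\phi$ in your step (i) is a true and useful fact (it is proved later in the paper for the Duistermaat-type argument), but it is not needed here: the direct loop-group argument above settles the proposition without passing to $Gr_0^{\mf k}$ at all.
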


\begin{proof}
	We first claim that the Borel subgroup $B$ is invariant under the complexification of $\sigma$. Write $\mf k_\complex$ in its root decomposition as
	$$ \mf k_\complex = \mf t_\complex \oplus \bigoplus_{\alpha \in \Phi} \mf k^\complex_\alpha$$
	where $\mf t_\complex$ is the complexification of $\mf t$ and $\mf k^\complex_\alpha$ are the corresponding root spaces. 
	Let $\hat \sigma: \mf k_\complex \to \mf k_\complex$ be the anti-holomorphic map defined in \eqref{eq:AntiholomorphicExtension}.
	It is sufficient to show that $\mf b$ is invariant under $\hat \sigma$. 
	Our choice of Borel corresponds to 
	$$ \mf b = \mf t_\complex \oplus \bigoplus_{\alpha \in \Phi_+} k^\complex_\alpha,$$
	and the definition of $\sigma$ ensures that $\mf t_\complex$ is invariant under $\hat\sigma$, so we need only show that each $\mf k_\alpha^\complex$ is invariant under $\hat\sigma$.
	For this, let $X \in \mf k_\alpha^\complex$ so that $[H,X] = \alpha(H) X$ for all $H \in \mf t_\complex$. Note that it is in fact sufficient to just take $H \in \mf t$, and that $\alpha|_{\mf t} \in i\real$. Since $\hat \sigma$ is a Lie algebra automorphism, this yields
	\begin{align*}
		\hat\sigma[H,X] &= [\hat\sigma(H),\hat\sigma(X)] = -[H,\hat\sigma(X)] \\
		&= \hat\sigma(\alpha(H) X) = \overline{\alpha(H)} \hat\sigma(X) = - \alpha(H) \hat\sigma(X)
	\end{align*}
	so that $[H,\hat\sigma(X)] = \alpha(H) \hat\sigma(X)$ showing that $\mf k_\alpha^\complex$ is invariant under $\hat \sigma$.

	Invariance of $\mc B$ under $\tau$ follows a similar procedure, wherein we demonstrate that $\text{Lie}(\mc B)$ is invariant under $d\tau$.
	We have that
	$$ \text{Lie}(\mc B) = \set{\gamma(z)=\sum_{k=0}^m A_k z^k: A_0 \in \mf b, m \in \mathbb N},$$
	so that if $\gamma \in \text{Lie}(\mc B)$ then
	\begin{align*}
		d\tau(\gamma) &= d\tau \left( \sum_{k=0}^m A_k z^k \right) = \sum_{k=0}^\infty \hat\sigma(A_k \bar z^k) = \sum_{k=0}^\infty \hat\sigma(A_k) z^k.
	\end{align*}
	Clearly $\hat\sigma(A_0) \in \mf b$ by our previous argument, showing that $\mc B$ is invariant under $\tau$. It is easy to check that each group morphism $\lambda:S^1\to T$ is fixed by $\tau$, and hence $\mc B\lambda$ is $\tau$ invariant. 

	Finally, the closure relation on the Schubert varieties is given by
	$$\overline{\mc B\lambda} = \bigsqcup_{\eta \leq \lambda} \mc B\eta,$$
	and as each $\mc B\eta$ is invariant under $\tau$, so too is $\overline{\mc B\lambda}$.
\end{proof}

In light of the previous proposition, it makes sense to write $\overline{\mc B\lambda}^\tau$ and to consider its image under the moment map. We next extend $\tau$ to $Gr_0(\mc H^\mf k)$ in order to utilize the Grassmannian model of the algebraic based loops. Define $\hat\tau: \mc H^\mf k \to \mc H^\mf k$ by $(\hat\tau f)(z) = \hat\sigma(f(\bar z))$, which extends to a map 
$$\hat\tau: Gr_0(\mc H^\mf k) \to Gr_0(\mc H^\mf k), \qquad \hat\tau W = \set{ \hat\sigma(f(\bar z)): f \in W }.$$
The map $\hat \tau$ is well defined on $Gr_0(\mc H^\mf k)$ since $\hat \sigma$ acts by Lie algebra automorphism, and is involutive since $\hat \sigma$ is the derivative of an involutive map.

\begin{proposition}
	The action of $\tau$ on $Gr_0(\mc H^\mf k)$ preserves $Gr_0^\mf k$ and its corresponding filtration. Moreover, if $\phi: \Omega_\text{alg} K \to Gr_0(\mc H^k)$ is the symplectic embedding given in Theorem \ref{thm:PSDiff}, then $\phi$ is $\integ_2$-equivariant.
\end{proposition}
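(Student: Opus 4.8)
The plan is to reduce everything to a handful of elementary properties of $\hat\tau$ as an operator on $\mc H^\mf k$, all descending from the fact that $d_e\sigma$ is a bracket-preserving isometry of $(\mf k,\inn\cdot\cdot_\mf k)$ commuting with the real structure on $\mf k_\complex$, together with $\hat\sigma$ being its conjugate-linear extension. First I would record, for $f=\sum_k a_k z^k\in\mc H^\mf k$ written in Fourier coordinates, the clean description $\hat\tau f=\sum_k\hat\sigma(a_k)z^k$; this is a one-line computation from $\bar z=z\minv$ on $S^1$ and the conjugate-linearity of $\hat\sigma$, and it exhibits $\hat\tau$ as a well-defined operator acting by $\hat\sigma$ on each Fourier coefficient. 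From this I extract the following, all short to verify: (i) $\hat\tau$ commutes with multiplication by $z^{\pm1}$; (ii) $\inn{\hat\tau f}{\hat\tau g}=\overline{\inn fg}$, using that $\hat\sigma$ is anti-unitary on $\mf k_\complex$ and that $z\mapsto\bar z$ preserves Haar measure on $S^1$; (iii) $\hat\tau$ commutes with the pointwise conjugation $f\mapsto\bar f$, since $d_e\sigma$ is real; (iv) $\hat\tau$ preserves smoothness and satisfies $\hat\tau[f,g]=[\hat\tau f,\hat\tau g]$ for the pointwise bracket, since $\hat\sigma$ is a Lie algebra automorphism of $\mf k_\complex$; and (v) $\hat\tau(\mc H_+)=\mc H_+$, immediate from the coefficient description.

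Granting these, preservation of $Gr_0^\mf k$ is formal. If $zW\subseteq W$ then $z\,\hat\tau W=\hat\tau(zW)\subseteq\hat\tau W$ by (i). If $zW=\bar W^\perp$ then (i), (ii) and (iii) combine to give $z\,\hat\tau W=\hat\tau(zW)=\hat\tau(\bar W^\perp)=(\hat\tau\bar W)^\perp=\overline{\hat\tau W}^\perp$. If $W_\text{sm}$ is involutive then $(\hat\tau W)_\text{sm}=\hat\tau(W_\text{sm})$ is involutive by (iv). For the filtration, (i) and (v) give $\hat\tau(z^{\pm m}\mc H_+)=z^{\pm m}\mc H_+$, so the chain $z^m\mc H_+\subseteq W\subseteq z^{-m}\mc H_+$ is carried to the same chain for $\hat\tau W$; hence $\hat\tau$ preserves each $Gr_{0,m}(\mc H^\mf k)$, and therefore each $Gr_{0,m}^\mf k=\mc G_m\cap Gr_0^\mf k$.

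For the $\integ_2$-equivariance of $\phi$, the one substantive ingredient is the intertwining identity $\hat\sigma\circ\Ad_g=\Ad_{\sigma(g)}\circ\hat\sigma$ for $g\in K$, obtained by differentiating $\sigma(ghg\minv)=\sigma(g)\sigma(h)\sigma(g)\minv$ and extending conjugate-linearly. Granting it, a typical element of $\phi(\gamma)=\gamma\cdot\mc H_+$ is the map $z\mapsto\Ad_{\gamma(z)}f(z)$ with $f\in\mc H_+$, and applying $\hat\tau$ yields $z\mapsto\hat\sigma\big(\Ad_{\gamma(\bar z)}f(\bar z)\big)=\Ad_{\sigma(\gamma(\bar z))}(\hat\tau f)(z)=\Ad_{(\tau\gamma)(z)}(\hat\tau f)(z)$. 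As $f$ runs over $\mc H_+$ so does $\hat\tau f$ by (v), whence $\hat\tau(\phi(\gamma))=(\tau\gamma)\cdot\mc H_+=\phi(\tau\gamma)$, i.e. $\phi\circ\tau=\hat\tau\circ\phi$.

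I expect the main obstacle to be purely bookkeeping of conjugations: the conjugate-linearity of $\hat\sigma$ and the identification $\bar z=z\minv$ on $S^1$ interact in nearly every step, and it is easy to pick up or lose a spurious complex conjugation — the Fourier description in the first paragraph is the cleanest safeguard, since it makes $\hat\tau$ manifestly ``$\hat\sigma$ on each coefficient''. A secondary point needing care is pinning down the meaning of $\bar W$ in condition (2) defining $Gr_0^\mf k$: I would fix the convention $\bar f(z)=\overline{f(z)}$ (pointwise conjugation in $\mf k_\complex$), check it is compatible with $z\mc H_+=\overline{\mc H_+}^\perp$, and then observe that (iii) is exactly what makes the $zW=\bar W^\perp$ step go through.
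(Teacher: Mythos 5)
Your proposal is correct and follows essentially the same route as the paper: it rests on the same facts that $\hat\tau$ commutes with multiplication by $z$, is compatible with $W\mapsto\overline W^{\perp}$ and with the Lie bracket, fixes $\mc H_+$, and satisfies the intertwining identity $\hat\sigma\circ\Ad_k=\Ad_{\sigma(k)}\circ\hat\sigma$, which yields $\hat\tau\circ\phi=\phi\circ\tau$ exactly as in the paper. Your Fourier-coefficient description of $\hat\tau$ and the explicit anti-unitarity argument for the step $\hat\tau(\overline W^{\perp})=\overline{\hat\tau W}^{\perp}$ merely spell out what the paper compresses into the remark that $\hat\tau$ is an isometry of the Killing form.
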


\begin{proof}
We begin by showing that $Gr_0^\mf k$ is preserved. Multiplication by $z$ commutes with $\hat \tau$, since for any $W \in Gr_0(\mc H^\mf k)$ we have
$$ \hat \tau(zW) = \set{ \hat\sigma(\bar z f(\bar z)) : f \in W} = \set{ z \hat\sigma(f(\bar z)): f \in W} = z\cdot \hat \tau W,$$
keeping in mind the anti-holomorphic nature of $\hat\sigma$. Consequently, $\tau$ leaves $z^m\mc H_+$ invariant, preserving the filtration. Furthermore, $\hat \tau$ is an isometry of the Killing form on $\mf k_\complex$ and so commutes with the map $W \mapsto \overline W^\perp$. 
Now if $W \in Gr_0^\mf k$ then $zW \subseteq W$ implies that
$$ z(\hat\tau W) = \hat \tau(zW) \subseteq \hat\tau W,$$
while $zW = \overline W^\perp$ implies that
$$ z(\hat \tau W) = \hat\tau (zW) = \hat\tau(\overline W^\perp) = (\overline{\hat\tau W})^\perp.$$
The involutivity of $W_\text{sm}$ follows immediately from the fact that $\hat\sigma$ is a Lie algebra automorphism, so $Gr_0^\mf k$ is preserved by $\tau$.

All that remains to be shown is the compatibility of $\tau$ with $\hat\tau$ through the embedding $\phi$. Certainly $\hat\tau \mc H_+ = \mc H_+$, and for any $k \in K$ one has $\hat \sigma \circ \Ad_k = \Ad_{\sigma(k)} \circ \hat \sigma$, so that
\begin{align*}
	\hat\tau \phi(\gamma) &= \set{ \hat\sigma ( \Ad_{\gamma(\bar z)} f(\bar z)):f \in \mc H_+} \\
	&= \set{\Ad_{\sigma(\gamma(\bar z))} \hat \sigma(f(\bar z)): f \in \mc H_+} \\
	&= \set{\Ad_{\tau \gamma} f: f \in\mc H_+ } \\
	&= \phi(\tau\gamma).
\end{align*}
\end{proof}

Fix a Schubert variety $\overline{\mc B\lambda}$ and choose $m$ sufficiently large so that $\overline{\mc B\lambda}$ embeds into $\mc G_m$.
Define $\tilde \sigma: S^1 \times K \to S^1 \times K$ by $\tilde \sigma(s,k) = (\inv s, \sigma(k))$. Whenever $(s,t) \in S^1 \times T$ one has $\tilde\sigma(s,t) = (s^{-1}, t^{-1})$. This implies that $\real \oplus \mf t$ is contained in the $(-1)$-eigenspace of $d_e\tilde\sigma$,  $\real \oplus \mf q$, and hence $\mf m = \real \oplus \mf t$. In the context of this particular choice of $\tilde \sigma$, Equation \eqref{eq:NonAbelianConvexity} simply reads $\Delta(Y) = \Delta(X)$.

\begin{lemma}\label{lemma:GmManifold}
	$(\mc G_m, \omega'_\text{FS}, \tau, \mu)$ is a compact, connected, $(S^1 \times K, \tilde \sigma)$-manifold.
\end{lemma}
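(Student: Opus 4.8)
The strategy is to check the clauses in the definition of a Hamiltonian $(S^1\times K,\tilde\sigma)$-manifold one at a time, drawing on the structure already assembled on $\mc G_m$. Compactness and connectedness are immediate, since $\mc G_m\cong Gr_\complex(nm,2nm)$ is a smooth, compact, connected complex manifold, and $\omega'_\text{FS}$ is the Pl\"ucker pullback of a Fubini--Study form, hence a K\"ahler (in particular symplectic) form. The $S^1\times K$-action on $\mc G_m$ is the restriction to the compact real form of the $\complex^\times\times K_\complex$-action on $Gr_0(\mc H^\mf k)$; it preserves $Gr_{0,m}(\mc H^\mf k)=\mc G_m$ because it commutes with multiplication by $z$ and fixes $z^{\pm m}\mc H_+$, and it is Hamiltonian because the $\complex^\times\times K_\complex$-linearization of $\det_m$ constructed in the proof of Theorem \ref{thm:NonabelianConvexity}, restricted to $S^1\times K$ and combined with the curvature identity $F_\nabla=-2\pi i\,\omega'_\text{FS}$, produces a moment map $\mu\colon\mc G_m\to(\real\oplus\mf k)^*$ by the standard (Kostant) construction; this is the $\mu$ appearing in the quadruple, and the one used implicitly in Theorems \ref{thm:GS} and \ref{thm:Goldberg}.

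Next I would verify that $\tau$ — meaning the involution $\hat\tau$ of the previous proposition, which preserves $Gr_{0,m}(\mc H^\mf k)=\mc G_m$ since $\hat\tau$ is a conjugate-linear bijection of $\mc H^\mf k$ fixing $z^{\pm m}\mc H_+$ — is an anti-symplectic involution. Involutivity is inherited from $\hat\sigma$. For anti-symplecticity the point is that $\hat\sigma$ is anti-unitary for the Hermitian form on $\mf k_\complex$: it is conjugate-linear and $\langle\hat\sigma u,\hat\sigma v\rangle_{\mf k_\complex}=\overline{\langle u,v\rangle_{\mf k_\complex}}$, because $d_e\sigma$ is an isometric involution of the Killing form on $\mf k$. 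Hence $\hat\tau$ is anti-unitary on $\mc H^\mf k$, descends to an anti-unitary map of $\complex^{2nm}$, and therefore acts anti-holomorphically on $\mc G_m$ and on $\mathbb P^M$; since an anti-unitary map induces an anti-symplectic diffeomorphism for the Fubini--Study form, pulling back along the Pl\"ucker embedding gives $\tau^*\omega'_\text{FS}=-\omega'_\text{FS}$.

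The two compatibility conditions are the real content. Condition (2), $\tau(u\cdot m)=\tilde\sigma(u)\,\tau(m)$, I would check on $\mc G_m$ by treating the $S^1$- and $K$-factors separately: for $s\in S^1$, writing $s\cdot W=\set{f(sz):f\in W}$, one uses $\bar s=s\minv$ to get $\hat\tau(s\cdot W)=\set{\hat\sigma(f(s\bar z)):f\in W}=\tilde\sigma(s)\cdot\hat\tau W$, and for $k\in K$ one uses $\hat\sigma\circ\Ad_k=\Ad_{\sigma(k)}\circ\hat\sigma$, exactly as in the proof that $\phi$ is $\integ_2$-equivariant. For condition (1), $\mu\circ\tau=-(d_e\tilde\sigma)^*\circ\mu$, the cleanest route is the standard moment-map uniqueness argument: using that $\tau$ is anti-symplectic and that (2) holds, one checks that $m\mapsto-(d_e\tilde\sigma)^*\mu(\tau(m))$ is again a moment map for the $S^1\times K$-action, so it differs from $\mu$ by a constant; since $K$ is semisimple this constant lies in $\real^*\oplus 0$, and evaluating at the $\tau$-fixed point $\mc H_+$ — where $\mu(\mc H_+)\in\real^*\oplus 0$ as well, being a fixed point of the connected group $S^1\times K$, and where $-(d_e\tilde\sigma)^*$ restricts to the identity on $\real^*$ — forces the constant to vanish.

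I expect the only genuinely delicate point to be the normalization of $\mu$ in condition (1); the observation that the potential discrepancy lives in the central $\real^*$-direction, on which $-(d_e\tilde\sigma)^*$ acts trivially, is what makes it evaporate, so no delicate identification of $\mu$ with \eqref{eq:MomentMap} is actually needed here. Everything else is bookkeeping already prepared by Theorem \ref{thm:PSDiff} and the construction of the linearization on $\det_m$; one could alternatively verify (1) directly on $Gr_{0,m}^\mf k=\phi(\Omega_{\text{alg},m}K)$ from the logarithmic-derivative formula \eqref{eq:MomentMap} using that $d_e\sigma$ is an isometry, but $Gr_{0,m}^\mf k$ is not dense in $\mc G_m$, so the uniqueness argument is the one I would actually write.
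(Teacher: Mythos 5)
Your proof is correct, and for the equivariance condition (2) it is verbatim the paper's argument: check the $S^1$- and $K$-factors separately on subspaces $W$, using $\bar s=s\minv$ and $\hat\sigma\circ\Ad_k=\Ad_{\sigma(k)}\circ\hat\sigma$. Where you diverge is in the remaining items. The paper treats compactness, connectedness, anti-symplecticity of $\tau$ and the existence of the moment map as "immediately true" and, for condition (1), simply cites \cite[Lemma 2.2]{OShea.Sjamaar2000} to conclude that (1) holds "up to an appropriate shifting of $\mu$", leaving that shift unresolved. You instead (i) prove anti-symplecticity explicitly, by observing that $\hat\sigma$ is anti-unitary for $\inn\cdot\cdot_{\mf k_\complex}$, hence $\hat\tau$ is anti-unitary on $\mc H^{\mf k}$ and descends to an anti-holomorphic, anti-symplectic map of $(\mc G_m,\omega'_\text{FS})$ — an argument the paper only supplies later, in the other direction, via Lemma \ref{lemma:AHiffAS}; and (ii) replace the citation for condition (1) by the standard uniqueness argument: $m\mapsto-(d_e\tilde\sigma)^*\mu(\tau(m))$ is an equivariant moment map for the same action, so it differs from $\mu$ by a coadjoint-invariant constant in $\real^*\oplus 0$ (semisimplicity of $K$), and evaluating at the $S^1\times K$- and $\tau$-fixed point $\mc H_+\in\mc G_m$, where equivariance forces $\mu(\mc H_+)\in\real^*\oplus 0$ and $-(d_e\tilde\sigma)^*$ is the identity on $\real^*$, kills the constant. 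This buys you something the paper's proof does not literally deliver: condition (1) holds on the nose for the given $\mu$, with no residual shift, which is what the later application of Theorem \ref{thm:Goldberg} actually requires; the only cosmetic gap is that you should note (one line) that $\mu'$ is again \emph{equivariant}, which follows from equivariance of $\mu$, condition (2), and $\tilde\sigma$ being an automorphism.
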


\begin{proof}
	The majority of the desired properties are immediately true: $S^1\times K$ is a compact group with involution $\tilde \sigma$, $(\mc G_m,\omega_\text{FS})$ is a symplectic manifold with moment map $\mu$ and anti-symplectic involution $\tau$.
Thus all that needs to be checked are the compatibility conditions.
We begin by showing condition (2); namely, if $(s,k) \in S^1 \times K$ then $\tau((s,k)\cdot W) = (\inv s, \sigma(k))\cdot \tau(W)$. 

We will check each action separately. Let $W \in Gr_0(\mc H^\mf k)$, $s \in S^1$, and $k \in K$, so that
\begin{align*}
	\tau(s \cdot W) &= \set{ \hat\sigma\left[ f(s\bar z) \right] : f \in W} = \set{ \hat \sigma \left[ f(\overline{\inv s z}) \right] : f \in W } \\
	&= \inv s \set{ \hat \sigma [f(\bar z)] : f \in W} \\
	&= \inv s \cdot \tau(W).\\
	\tau(k \cdot W) &= \set{ \hat\sigma\left[\Ad_k f(\bar z) \right] : f \in W} = \set{\Ad_{\sigma(k)} \hat\sigma[f(\bar z)]: f \in W} \\
	&= \sigma(k) \cdot \set{ \hat\sigma[f(\bar z)]: f \in W} \\
	&= \sigma(k)\cdot\tau(W).
\end{align*}
On the other hand, since $S^1 \times K$ is connected, \cite[Lemma 2.2]{OShea.Sjamaar2000} implies that condition (1) is true up to an appropriate shifting of $\mu$.
\end{proof}

We are well acquainted with the fact that $\mc G_m$ is in fact K\"ahler, so it certainly has a complex structure which is compatible with $\omega_{FS}'$.
To see that $\tau$ is anti-holomorphic with respect to this structure, we introduce the following result:

\begin{lemma}\label{lemma:AHiffAS}
	If $(M,\omega,J,g)$ is a K\"ahler manifold and $\tau: M \to M$ is an involutive isometry, then $\tau$ is antiholomorphic if and only if $\tau$ is antisymplectic.
\end{lemma}

\begin{proof}
	Assume first that $\tau$ is anti-holomorphic.
In particular, $d\tau \circ J = -J \circ d\tau$.
To see that $\omega$ is anti-symplectic, we have
\begin{align*}
	(\tau^*\omega)(X,Y) &= \omega(d\tau X, d\tau Y) = g(J d\tau X, d\tau Y) \\
	&= -g(d\tau JX, d\tau Y)\\
	&= -g(JX,Y)\\ 
	&= -\omega(X,Y).
\end{align*}

Conversely, assume that $\tau$ is anti-symplectic.
Note that since $\tau$ is involutive it is necessarily bijective, so that $d\tau$ is an isomorphism.
Proceeding in a similar fashion as that above, we have
$$(\tau^* g)(X,Y) = g(d\tau X, d\tau Y) = \omega(d\tau X, J d\tau Y)$$
Since $\tau$ is an isometry, $\tau^* g = g$, so we also have
$$ (\tau^* g)(X,Y) = g(X,Y) = \omega(X,JY) = -\omega (d\tau X, d\tau JY) $$
where in the last equality we have used the fact that $\tau$ is anti-symplectic.
By multilinearity, we thus have
$$ 0 = \omega(d\tau X, Jd\tau Y + d\tau JY).$$
Since $X$ was arbitrary and $d\tau$ is surjective, non-degeneracy of $\omega$ immediately implies that $(J \circ d\tau + d\tau \circ J)Y = 0$, and since $Y$ was arbitrary, $\tau$ is anti-holomorphic.

Note that we did not need involutivity to show that anti-holomorphic implied anti-symplectic.
Similarly, we only needed that $\tau$ was surjective in order to show the converse direction.
\end{proof}

\begin{lemma}\label{lemma:AntiHolLift}
	The involution $\tau: \mc G_m \to \mc G_m$ lifts to an involutive anti-holomorphic bundle map $\tau_{\det_m}: \det_m \to \det_m$.
Moreover, this map preserves the connection; that is, $\tau_{\det_m}^*\nabla = \nabla.$
\end{lemma}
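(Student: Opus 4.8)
The plan is to build $\tau_{\det_m}$ by transporting $\hat\tau$ through the tensorial construction of $\det_m$ from the tautological bundle, and then to identify $\tau_{\det_m}^*\nabla$ with $\nabla$ by invoking the fact that $\nabla$ is \emph{the} Chern connection of the holomorphic Hermitian line bundle $\det_m$ (it is the pullback under the Plücker embedding of the Chern connection on $\mc O(1)$, and the Plücker map is holomorphic). For the construction, write $V = z^{-m}\mc H_+/z^m\mc H_+$, so that $\mc G_m = Gr_\complex(nm,2nm)$ is the Grassmannian of $V$, with tautological rank-$nm$ bundle $\gamma_m \subseteq \mc G_m \times V$ whose fibre over $W$ is $W$, and $\det_m^* = \Lambda^{nm}\gamma_m$, $\det_m = (\det_m^*)^*$. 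Since $\hat\tau$ restricts to an anti-linear bijection $V \to V$ carrying each $W$ onto $\hat\tau W$, the assignment $(W,w)\mapsto(\hat\tau W,\hat\tau w)$ is an anti-linear bundle automorphism of $\gamma_m$ covering $\tau$; taking $nm$-th exterior powers gives an anti-linear bundle automorphism of $\det_m^*$, and passing to the dual (an anti-linear automorphism $J$ of a complex line $\ell$ induces the anti-linear automorphism $\xi\mapsto\overline{\xi\circ J}$ of $\ell^*$) gives $\tau_{\det_m}\colon\det_m\to\det_m$ covering $\tau$. Each step is involutive because $\hat\tau^2=\id$ (for the dual step, $\overline{\overline{\xi\circ J}\circ J}=\xi\circ J^2=\xi$).

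Anti-holomorphicity of the lift is cleanest to see on the trivial bundle: $\id_{\mc G_m}\times\hat\tau$ acts on $\mc G_m\times V$ with first component the anti-holomorphic involution $\tau$ of $\mc G_m$ (anti-holomorphic by Lemma \ref{lemma:AHiffAS}, since $\tau$ is an anti-symplectic involutive isometry of $\mc G_m$) and second component a fixed anti-linear map, hence is anti-holomorphic; it preserves the holomorphic subbundle $\gamma_m$, so its restriction is anti-holomorphic, and exterior powers and duals of anti-holomorphic bundle maps are again anti-holomorphic. Thus $\tau_{\det_m}$ is anti-holomorphic. (Equivalently, $\hat\tau$ is an anti-unitary involution of $V$, so $V$ admits an orthonormal basis of $\hat\tau$-fixed vectors; in such a basis $\tau$ on $\mc G_m$ is ordinary complex conjugation of subspaces, and the whole diagram pulls back, via Plücker, from the standard conjugation on $\mathbb P^{\binom{2nm}{nm}}$, for which the corresponding statement is standard.)

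It remains to show $\tau_{\det_m}^*\nabla=\nabla$, where $\tau_{\det_m}^*\nabla$ is the connection whose parallel transport along a curve $c$ is $\tau_{\det_m}\circ(\text{$\nabla$-transport along }\tau\circ c)\circ\tau_{\det_m}^{-1}$. I would verify the two defining properties of the Chern connection. Compatibility with the Hermitian metric $h$: the Killing form is $d_e\sigma$-invariant and $z\mapsto\bar z$ preserves the $L^2$ structure, so $\hat\tau$ is anti-unitary on $\mc H^{\mf k}$, hence on $V$, hence on $\gamma_m$, $\det_m^*$ and $\det_m$, in the sense $h(\tau_{\det_m}u,\tau_{\det_m}v)=\overline{h(u,v)}\circ\tau$; conjugating $h$-unitary parallel transports by the anti-unitary $\tau_{\det_m}$ gives $h$-unitary transports, so $\tau_{\det_m}^*\nabla$ is $h$-compatible. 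Holomorphic type: if $e$ is a local holomorphic frame of $\det_m$, then $\tilde e:=\tau_{\det_m}\circ e\circ\tau$ is again holomorphic (anti-holomorphic composed with anti-holomorphic), and writing $\nabla e=\theta\otimes e$ with $\theta$ of type $(1,0)$ one computes $(\tau_{\det_m}^*\nabla)\tilde e=\overline{\tau^*\theta}\otimes\tilde e$, where $\overline{\tau^*\theta}$ is again of type $(1,0)$ since anti-holomorphic pullback exchanges $(1,0)$ and $(0,1)$ and conjugation exchanges them back. Hence $(\tau_{\det_m}^*\nabla)^{0,1}=\bar\partial$, and by uniqueness of the Chern connection $\tau_{\det_m}^*\nabla=\nabla$.

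None of these steps is deep, but I expect the bookkeeping of conjugations to be the main obstacle: one must check that the dualization step introduces no spurious conjugate (so that both anti-unitarity and anti-holomorphicity genuinely survive to $\det_m$), and one must get the Hodge types right in the last step so that $\bar\partial$ — and not $\partial$ — emerges. I would therefore fix conventions once, in particular defining $\tau_{\det_m}^*\nabla$ through conjugated parallel transport as above, and propagate them consistently.
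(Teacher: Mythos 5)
Your proposal is correct, and the lift itself is constructed exactly as in the paper: an anti-linear map on the top exterior power of the tautological bundle that conjugates the scalar coefficient (the paper does this with explicit bases, checking well-definedness via $\overline{\det C}=\det\overline{C}$; your functorial packaging through $\gamma_m$, $\Lambda^{nm}$, and dualization is the same construction). The differences are in the supporting steps, and they are worth noting. For anti-holomorphicity the paper works in local trivializations and invokes Hartogs' theorem for separate anti-holomorphicity, whereas you observe that $(W,v)\mapsto(\tau W,\hat\tau v)$ is globally anti-holomorphic on $\mc G_m\times V$ and restricts to the holomorphic subbundle $\gamma_m$ — a cleaner route (modulo the harmless slip of writing $\id_{\mc G_m}\times\hat\tau$ for what is really $\tau\times\hat\tau$). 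For connection preservation the paper cites Lemma 4.4 of O'Shea--Sjamaar, using only that $\tau$ preserves the (real part of the) Hermitian metric and is anti-holomorphic; you instead reprove that lemma's content directly, checking metric compatibility via anti-unitarity and that the connection form in the conjugated holomorphic frame $\tilde e=\tau_{\det_m}\circ e\circ\tau$ is $\overline{\tau^*\theta}$, still of type $(1,0)$, so uniqueness of the Chern connection applies. This buys you a self-contained argument, and your justification that $\hat\tau$ is anti-unitary (invariance of the Killing form under $d_e\sigma$ together with measure-invariance of $z\mapsto\bar z$) is in fact more complete than the paper's one-line assertion that the involution preserves the real part of $h$; the cost is only the conjugation bookkeeping you already flag, all of which checks out.
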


\begin{proof}
Define $\tau_{\det_m}: \det_m \to \det_m$ as follows: Fix $W \in \mc G_m$ and a basis $\set{w_1,\ldots,w_{nm}}$ of $W$, so that the fibre over $W$ is given by $(W,[\alpha,w])$.
Fibrewise, we define
	$$ \tau_{\det_m}: (\text{det}_m)_{W} \to (\text{det}_m)_{\tau W}, \qquad \left( W,[\alpha,w]\right) \mapsto \left( \tau W, [\overline\alpha, \tau(w)]\right).$$
	Certainly if $w$ is a basis then $\tau(w)$ is a basis, convincing us that $\tau w_1 \wedge \cdots \wedge \tau w_k$ is indeed non-zero line element.
	
	To see that the map is well defined, let $w$ and $w'$ be two different bases and $C$ be the change of variable matrix sending $w$ to $w'$; that is, $w'_i = C_{ij} w_j$.
Since $\tau$ is anti-holomorphic
	$$ \tau w_i' = \tau\left[C_{ij} w_j\right] = \overline{C_{ij}} \tau w_j$$
	implying that the change of basis between $\tau w$ and $\tau w'$ is given by $\overline{C_{ij}}$.
We identify $[\alpha, w']$ and $[\alpha\cdot\det C, w]$, so
	$$ \tau\left[ \alpha w_1'\wedge \cdots \wedge w_k'\right] = \overline{\alpha} (\tau w_1') \wedge \cdots \wedge (\tau w_k') = \overline{\alpha} \det{\overline C} (\tau w_1 \wedge \cdots \wedge \tau w_k).$$
	On the other hand, we have
	$$ \tau\left[ \alpha w_1 \wedge \cdots \wedge w_k \right] = \overline{\alpha \det C} (\tau w_1) \wedge \cdots \wedge (\tau w_k),$$
	and these are quite naturally equal since $\overline{\det C} = \det{\overline C}$.

	The map $\tau_{\det_m}$ is clearly involutive, and it is anti-holomorphic since in any trivializing neighbourhood $U$, $\tau: U \times \complex \to U \times \complex$ is separately anti-holomorphic on $U$ and conjugate-linear on $\complex$.
By Hartogs' theorem, $\tau$ is thus anti-holomorphic.

	All that remains to be shown is that $\tau$ preserves the connection.
Since $\tau$ is an involution, it preserves the real part of the hermitian metric $h$.
Since it is additionally anti-holomorphic, by Lemma 4.4 of \cite{OShea.Sjamaar2000} we have that $\tau$ preserves the connection.
\end{proof}

\begin{proposition}\label{prop:CellConvexity}
	If $\lambda \in X_*(T)_+$ is a dominant coweight, then
	$$\Delta(\overline{\mc B\lambda}) = \Delta(\mc B\lambda^\tau).$$
\end{proposition}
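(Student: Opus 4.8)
The plan is to present $X := \overline{\mc B\lambda}$, embedded in $\mc G_m$ for $m$ large, as an admissible variety in Goldberg's Theorem \ref{thm:Goldberg}, applied with $\tilde\sigma(s,k) = (s^{-1},\sigma(k))$, and then to read off \eqref{eq:NonAbelianConvexity}. Nearly every hypothesis has already been assembled in the preceding lemmas, so the first step is to collect them. Lemma \ref{lemma:GmManifold} makes $(\mc G_m,\omega'_{\text{FS}},\tau,\mu)$ a compact connected $(S^1\times K,\tilde\sigma)$-manifold; since the conjugate-linear isometry $\hat\tau$ of $\mc H^\mf k$ descends to an isometry of $(\mc G_m,\omega'_{\text{FS}})$, Lemma \ref{lemma:AHiffAS} shows that $\tau$ is anti-holomorphic for the K\"ahler complex structure; Section \ref{sec:Setup} records that $\det_m\to\mc G_m$ is a holomorphic line bundle with connection of curvature $-2\pi i\,\omega'_{\text{FS}}$, and Lemma \ref{lemma:AntiHolLift} lifts $\tau$ to an involutive, anti-holomorphic, connection-preserving bundle map $\tau_{\det_m}$; and Proposition \ref{prop:BruhatTau}, together with the $B$-invariance observation in the proof of Theorem \ref{thm:NonabelianConvexity}, shows that $B$ is stable under $\hat\sigma$ and that $X=\overline{\mc B\lambda}$ is a closed, irreducible, $B$-invariant, $\tau$-invariant complex subvariety of $\mc G_m$ with $X^\tau\neq\emptyset$ (it contains the coweight loop $\lambda$).

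Second, I would verify the remaining pair conditions. The open Bruhat cell $\mc B\lambda$ is a complex affine space, hence smooth, and dense in $\overline{\mc B\lambda}$, so $\mc B\lambda\subseteq X_{\text{reg}}$ and $X_{\text{reg}}\cap \mc G_m^\tau \supseteq \mc B\lambda^\tau \ni\lambda$ is non-empty. Moreover $\tau$ restricts to an anti-holomorphic involution of $\mc B\lambda\cong\complex^N$ ($N=\dim_\complex\mc B\lambda$) with a fixed point, so $\mc B\lambda^\tau$ is a connected totally real submanifold of real dimension $N$, and the complement of $\mc B\lambda^\tau$ inside the connected component $C$ of $X_{\text{reg}}\cap \mc G_m^\tau$ containing $\lambda$ lies in $\bigsqcup_{\eta<\lambda}\mc B\eta$ and so has real dimension $<N$; hence $\mc B\lambda^\tau$ is dense in $C$. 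Taking $Y$ to be the closure of $C$ makes $(X,Y)$ a genuine $(S^1\times K,\tilde\sigma)$-pair, with $\mc B\lambda^\tau$ dense in $Y$ and $Y\subseteq \overline{\mc B\lambda}^\tau\subseteq\overline{\mc B\lambda}$.

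Third, I would identify the subspace $\mf m^*$ in \eqref{eq:NonAbelianConvexity}. As observed just before Lemma \ref{lemma:GmManifold}, for this $\tilde\sigma$ the $(-1)$-eigenspace of $d_e\tilde\sigma$ on $\real\oplus\mf k$ is $\real\oplus\mf q$, and $\mf m=\real\oplus\mf t$ is a maximal abelian subspace of it; under the identifications in force, $\mf m^* = \real\oplus\mf t^*\supseteq\real\oplus\mf t_+^*$, so $\Delta(X)=\mu(X)\cap\mf t_+^*$ is already contained in $\mf m^*$ and $\Delta(X)\cap\mf m^* = \Delta(X)$. Theorem \ref{thm:Goldberg} then gives $\Delta(Y) = \Delta(\overline{\mc B\lambda})$. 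Combining this with the inclusions above and the continuity of $\mu$ on the compact set $\overline{\mc B\lambda}$ (so that $\mu(Y)=\overline{\mu(\mc B\lambda^\tau)}$), one concludes $\Delta(\mc B\lambda^\tau) = \Delta(\overline{\mc B\lambda})$, as claimed.

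The genuinely delicate point, I expect, is the pair bookkeeping of the second step rather than anything in the first or third. One must keep in mind that $\overline{\mc B\lambda}$ is only $B$-invariant — it is not stable under all of $S^1\times K$ — so Brion's theorem is unavailable and the Guillemin--Sjamaar/Goldberg form is essential; one must locate a $\tau$-fixed point inside the smooth locus, which the coweight loop $\lambda$ supplies since it sits in the top cell; and one must argue that the component of the real locus it determines is small enough that $\mc B\lambda^\tau$ already recovers all of $\Delta(\overline{\mc B\lambda})$ in the limit, which is exactly where the dimension count on the lower cells $\mc B\eta$, $\eta<\lambda$, is used.
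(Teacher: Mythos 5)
Your setup (steps one and three) matches the paper's: the hypotheses of Theorem \ref{thm:Goldberg} for $\mc G_m$, $\det_m$, and $X=\overline{\mc B\lambda}$ are exactly the content of Lemmas \ref{lemma:GmManifold}--\ref{lemma:AntiHolLift} together with Proposition \ref{prop:BruhatTau}, and the identification $\mf m=\real\oplus\mf t$, so that \eqref{eq:NonAbelianConvexity} reads $\Delta(Y)=\Delta(X)$, is the same. The divergence --- and the gap --- is in your second step. The paper does not argue by density: it invokes the fact that, because $\lambda$ is dominant, the smooth locus of $\overline{\mc B\lambda}$ is \emph{exactly} the open cell $\mc B\lambda$ \cite{MalkinEtAl2005}, so that $X_{\text{reg}}\cap\mc G_m^\tau=\mc B\lambda^\tau$ on the nose; it then notes that this set has finitely many connected components $C_k$, each closed, and applies Theorem \ref{thm:Goldberg} with $Y=\overline{C_k}=C_k\subseteq\mc B\lambda^\tau$, giving $\Delta(\mc B\lambda^\tau)=\bigcup_k\Delta(C_k)=\Delta(\overline{\mc B\lambda})$ as an equality of sets. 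This is the only place the dominance hypothesis enters; your argument never uses it, which is already a warning sign.

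Your replacement --- take $C$ to be the component of $X_{\text{reg}}\cap\mc G_m^\tau$ through $\lambda$, show $\mc B\lambda^\tau\cap C$ is dense in $C$ by a dimension count on the lower cells, and set $Y=\overline C$ --- only yields $\Delta(\overline{\mc B\lambda})=\Delta(Y)=\overline{\mu(\mc B\lambda^\tau\cap C)}\cap\mf t_+^*\subseteq\overline{\Delta(\mc B\lambda^\tau)}$. Since $\mc B\lambda^\tau$ is not compact and $\mu(\mc B\lambda^\tau)$ is not known to be closed, this proves the proposition only up to closure, i.e.\ $\overline{\Delta(\mc B\lambda^\tau)}=\Delta(\overline{\mc B\lambda})$, whereas the exact equality is what Theorem \ref{thm:AlgebraicConvexity} requires (the value $c=\mu(\gamma)$ must actually be attained on $\Omega_\text{alg}K^\tau$). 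Two smaller points: your assertion that $\mc B\lambda^\tau$ is connected is unjustified --- the restriction of $\tau$ to the affine cell need not be linearizable, and the paper deliberately allows finitely many components --- and the dimension count should be run on $\mc B\eta\cap\mc G_m^\tau=\mc B\eta^\tau$, which is totally real of real dimension at most $\dim_\complex\mc B\eta<N$, not on $\mc B\eta$ itself, whose real dimension can exceed $N$. The connectedness claim is harmless for the density step, but the closure issue is a genuine gap; to repair it you need the smooth-locus identification (hence dominance), so that the sets $Y$ fed into Goldberg's theorem lie inside $\mc B\lambda^\tau$ itself rather than merely having dense intersection with it.
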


\begin{proof}
Choose $m$ sufficiently large so that $\overline{\mc B\lambda}$ embeds into $\mc G_m$. 
We have already seen that $\overline{\mc B\lambda}$ is a closed irreducible subvariety of $\mc G_m$ invariant under $B$. 
Lemmas \ref{lemma:GmManifold} through \ref{lemma:AntiHolLift} show that $\mc G_m$ satisfies the hypotheses of Theorem \ref{thm:Goldberg},  and Lemma \ref{prop:BruhatTau} shows that $\overline{\mc B\lambda}$ is $\tau$-stable.
Furthermore, $\lambda \in \overline{\mc B\lambda}$ is a smooth point of $X$ which is also in $\mc G_m^\tau$, showing that $X_\text{sm} \cap M^\tau \neq\emptyset$. 

Applying Theorem \ref{thm:Goldberg}, if $Y$ is the closure of any connected component of $X_\text{sm} \cap M^\tau$ we will have $\Delta(X) = \Delta(\overline{\mc B\lambda}) = \Delta(Y)$.
Our challenge is thus to exploit this fact to yield $\Delta(\overline{\mc B\lambda}^\tau)$ on the right-hand-side.

Since $\lambda$ is dominant, the smooth locus of $\overline{\mc B\lambda}$ is precisely $\mc B\lambda$ \cite{MalkinEtAl2005}, giving $X_{\text{reg}} \cap M^\tau = \mc B\lambda^\tau$.
This set is closed and consists of only finitely many connected components, say $\mc B\lambda^\tau = \bigsqcup_k C_k$ where each $C_k$ is also closed in $\mc G_m$.
By Theorem \ref{thm:Goldberg} we have $\Delta(\overline{\mc B\lambda}) = \Delta(C_k)$ and hence
$$ \Delta(\mc B\lambda^\tau) = \Delta\left(\sqcup_k C_k\right) = \sqcup_k\Delta(C_k) = \sqcup_k \Delta\left(\overline{\mc B\lambda}\right) = \Delta\left(\overline{\mc B\lambda}\right).$$
\end{proof}

\begin{theorem}\label{thm:AlgebraicConvexity}
	$\Delta(\Omega_\text{alg} K) = \Delta(\Omega_\text{alg}K^\tau).$
\end{theorem}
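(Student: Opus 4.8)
The plan is to run the very same exhaustion argument that proved Theorem~\ref{thm:NonabelianConvexity}, now fed by Proposition~\ref{prop:CellConvexity} in place of Theorem~\ref{thm:GS}. The one new wrinkle is that Proposition~\ref{prop:CellConvexity} is stated only for dominant coweights, so the exhaustion has to be organized around the Schubert varieties $\overline{\mc B\eta}$ with $\eta$ dominant.

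One inclusion is free: since $\Omega_\text{alg}K^\tau\subseteq\Omega_\text{alg}K$ and $\Delta(X)=\mu(X)\cap\mf t_+^*$ is monotone in $X$, we get $\Delta(\Omega_\text{alg}K^\tau)\subseteq\Delta(\Omega_\text{alg}K)$ immediately. For the reverse inclusion I would take $x\in\Delta(\Omega_\text{alg}K)$, choose $p\in\Omega_\text{alg}K$ with $\mu(p)=x\in\mf t_+^*$, and use the Bruhat decomposition \eqref{eq:BruhatDecomposition} to locate a coweight $\lambda$ with $p\in\mc B\lambda$. Next I would enlarge $\lambda$ to a dominant coweight $\eta\in X_*(T)_+$ with $\mc B\lambda\subseteq\overline{\mc B\eta}$ --- taking $\eta$ to be the dominant representative of the Weyl orbit of $\lambda$, so that $\mc B\lambda$ and $\mc B\eta$ sit in a common $L_\text{alg}^+K_\complex$-orbit on the affine Grassmannian (the Cartan decomposition) whose closure is $\overline{\mc B\eta}$ (cf.\ \cite{Mitchell1988}). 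Then $p\in\overline{\mc B\eta}$ and $x=\mu(p)\in\mu(\overline{\mc B\eta})\cap\mf t_+^*=\Delta(\overline{\mc B\eta})$. Applying Proposition~\ref{prop:CellConvexity}, which is legitimate since $\eta$ is dominant, gives $\Delta(\overline{\mc B\eta})=\Delta(\mc B\eta^\tau)$, and because $\mc B\eta^\tau\subseteq\Omega_\text{alg}K^\tau$ we land at $x\in\Delta(\Omega_\text{alg}K^\tau)$. Combining the two inclusions yields the asserted equality.

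The part I expect to need the most care is the cofinality step: one has to be certain that every Iwahori cell $\mc B\lambda$ is swallowed by some $\overline{\mc B\eta}$ with $\eta$ dominant, i.e.\ that the dominant-coweight Schubert varieties already exhaust $\Omega_\text{alg}K$, since only for those does Proposition~\ref{prop:CellConvexity} apply. This rests on the structure theory of the affine Grassmannian relating the Iwahori orbits to the $L_\text{alg}^+K_\complex$-orbits; with that in hand the remainder is just the formal directedness argument already used for Theorem~\ref{thm:NonabelianConvexity}. Finally, I would record --- exactly as in Corollary~\ref{cor:H1Convexity} --- that the corresponding equality for the full Sobolev $H^1$ loop group follows by taking closures, using the density of $\Omega_\text{alg}K$ in $\Omega K$ and of $\Omega_\text{alg}K^\tau$ in $\Omega K^\tau$ together with continuity of $\mu$.
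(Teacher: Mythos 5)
Your proposal is correct and follows essentially the same route as the paper: the reverse inclusion is trivial, and the forward inclusion is obtained by locating the point in an Iwahori cell, passing to a dominant-coweight Schubert variety containing that cell, and applying Proposition~\ref{prop:CellConvexity}. The only cosmetic difference is that the paper simply invokes cofinality of the dominant coweights in the Bruhat order, whereas you realize this concretely via the dominant Weyl-orbit representative and the decomposition of the $L^+_{\text{alg}}K_\complex$-orbit into Iwahori cells, which amounts to the same structural fact about the affine Grassmannian.
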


\begin{proof}
	We can write $\Omega_\text{alg} K$ as a disjoint union of the cells $\mc B\lambda$ as in \eqref{eq:BruhatDecomposition}, so that 
	$$ \Omega_\text{alg} K^\tau = \bigsqcup_{\lambda \in X_*(T)} \mc B\lambda^\tau.$$
	
	Let $c \in \mu_A(\Omega_\text{alg}K)$ and choose any $\gamma \in \Omega_\text{alg}K$ such that $\mu_A(\gamma) = c$.
There exists a unique $\mc B\lambda'$ such that $\gamma \in \mc B\lambda'$, and since the collection of dominant coweights is cofinal in the Bruhat order, there exists a $\lambda$ such that $\gamma \in \mc B\lambda' \subseteq \overline{\mc B\lambda}$.
By Proposition \ref{prop:CellConvexity},
	$$c = \Delta(\gamma) \in \Delta(\overline{\mc B\lambda}) = \Delta(\mc B\lambda^\tau) \subseteq \Delta(\Omega_\text{alg}K^\tau),$$
	showing that $\Delta(\Omega_\text{alg} K) \subseteq \Delta(\Omega_\text{alg} K^\tau)$.
The other inclusion is trivial, and the result then follows.
\end{proof}

One can apply Theorem \ref{thm:AlgebraicConvexity} to immediately deduce several useful corollaries, the first of which is that we can weaken the regularity conditions on loops:

\begin{corollary}\label{cor:TorusMomentAlg}
	Let $A \subseteq K$ be any torus, and $\mu_A = (\id \times \pr_\mf a) \circ \mu: \Omega K \to \real \oplus \mf a$ be the corresponding moment map for the $S^1 \times A$ action, where $\pr_\mf a: \mf k \to \mf a$ is the projection map. Then
	$$ \mu_A(\Omega_\text{alg} K) = \mu_A(\Omega_\text{alg} K^\tau).$$
\end{corollary}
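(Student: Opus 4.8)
The plan is to derive Corollary \ref{cor:TorusMomentAlg} from Theorem \ref{thm:AlgebraicConvexity} by projecting the full non-abelian statement down to the torus $A$. The key observation is that the moment map $\mu_A$ for the $S^1 \times A$ action factors through $\mu$ via the orthogonal projection $\pr_{\mf a}: \mf k \to \mf a$; explicitly $\mu_A = (\id \times \pr_{\mf a}) \circ \mu$. So if I can relate $\mu_A(\Omega_\text{alg}K)$ to the positive-Weyl-chamber data $\Delta(\Omega_\text{alg}K) = \mu(\Omega_\text{alg}K) \cap \mf t_+^*$, and do the same for the $\tau$-fixed locus, then Theorem \ref{thm:AlgebraicConvexity} closes the gap.

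First I would note that, since everything in sight is $K$-equivariant (the $S^1 \times K$ action on $\Omega_\text{alg}K$, hence $\mu$ is $K$-equivariant for the coadjoint action on $\mf k^*$), the set $\mu(\Omega_\text{alg}K)$ is $K$-invariant, and therefore is completely determined by its intersection with the positive Weyl chamber: $\mu(\Omega_\text{alg}K) = K \cdot \Delta(\Omega_\text{alg}K)$. Up to shrinking $A$ to a maximal torus $T$ containing it (which only makes the projection coarser and is harmless: once the statement holds for $T$ it holds for any sub-torus $A$ by post-composing with $\pr_{\mf a}$ restricted to $\mf t$), I may assume $A = T$. Then $\mu_T(\Omega_\text{alg}K)$ is the image of the $K$-saturation $K \cdot \Delta(\Omega_\text{alg}K)$ under the projection $\mf k^* \to \mf t^*$ (paired with the $\real$-factor, on which $\tilde\sigma$ and all the actions are trivial so it just rides along). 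The same description holds for the real locus: $\mu(\Omega_\text{alg}K^\tau)$ is $K^\sigma$-invariant but, more to the point, I want $\mu_T(\Omega_\text{alg}K^\tau)$, which is the $\mf t^*$-projection of $\mu(\Omega_\text{alg}K^\tau)$.

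The cleanest route is: by Theorem \ref{thm:AlgebraicConvexity}, $\Delta(\Omega_\text{alg}K^\tau) = \Delta(\Omega_\text{alg}K)$, i.e. $\mu(\Omega_\text{alg}K^\tau) \cap \mf t_+^* = \mu(\Omega_\text{alg}K) \cap \mf t_+^*$. Applying the projection $\pr_{\mf t}$ (together with the identity on $\real$) to both sides after $K$-saturating, and using that $\pr_{\mf t}$ composed with the $K$-orbit map is surjective onto $\mu_T$ of the relevant set — this is exactly the standard fact that for a $K$-invariant subset $S \subseteq \mf k^*$ one has $\pr_{\mf t}(S) = \pr_{\mf t}(K \cdot (S \cap \mf t_+^*)) = \bigcup_{w \in W} w \cdot (S \cap \mf t_+^*)$ when one further intersects with $\mf t^*$, but in any case $\pr_{\mf t}(S)$ depends only on $S \cap \mf t_+^*$ — I conclude $\mu_T(\Omega_\text{alg}K) = \mu_T(\Omega_\text{alg}K^\tau)$. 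For general $A \subseteq T$, post-compose with the further orthogonal projection $\mf t \to \mf a$, giving $\mu_A(\Omega_\text{alg}K) = \mu_A(\Omega_\text{alg}K^\tau)$.

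The main obstacle, and the step I would be most careful about, is the claim that $\pr_{\mf t}(\mu(\Omega_\text{alg}K^\tau))$ is determined by $\mu(\Omega_\text{alg}K^\tau) \cap \mf t_+^*$: this requires knowing that $\mu(\Omega_\text{alg}K^\tau)$ is not merely some subset but is stable under enough of $W$ (or $K$) that its $\mf t^*$-projection is the $W$-orbit of its chamber part. The real locus $\Omega_\text{alg}K^\tau$ is only $K^\sigma$-invariant, not $K$-invariant, so one cannot naively $K$-saturate it. The resolution is that one does not need $\mu(\Omega_\text{alg}K^\tau)$ itself to be $K$-invariant — one only needs, for each value $c \in \mu_T(\Omega_\text{alg}K)$, to produce a $\tau$-fixed loop whose $T$-moment image is $c$, and the containment $\Delta(\Omega_\text{alg}K) \subseteq \Delta(\Omega_\text{alg}K^\tau)$ from Theorem \ref{thm:AlgebraicConvexity} says every chamber-value is realized by a real loop; then, since $\mu_T = \pr_{\mf t} \circ \mu$ and $\pr_{\mf t}(\mf t_+^*)$ together with its $W$-translates exhaust $\pr_{\mf t}(\mu(\Omega_\text{alg}K))$, and the Weyl group $W = N_K(T)/T$ is represented by elements of $K$ that on the algebraic loop group can be realized by $\tau$-compatible conjugations (the normalizer elements can be chosen $\sigma$-fixed since $\sigma$ inverts $T$ hence preserves $N_K(T)$ and acts trivially on $W$), the $W$-orbit of a real chamber-value is again realized by real loops. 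So I would spell out this last point — that $W$ acts on $\mu_T(\Omega_\text{alg}K^\tau)$ compatibly — carefully, as it is the only place where anything beyond a formal projection argument is needed; everything else is bookkeeping with the identity $\mu_A = (\id \times \pr_{\mf a}) \circ \mu$.
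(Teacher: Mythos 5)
Your overall strategy (reduce to $A=T$, then transfer the chamber statement $\Delta(\Omega_\text{alg}K)=\Delta(\Omega_\text{alg}K^\tau)$ of Theorem \ref{thm:AlgebraicConvexity} to the $T$-moment image) is the same as the paper's, and your observation that $\Omega_\text{alg}K^\tau$ cannot be $K$-saturated is the right thing to worry about. But the step you use to bridge $\Delta$ and $\mu_T$ is wrong. You assert that the $\mf t^*$-projection of the $K$-invariant set $\mu(\Omega_\text{alg}K)=K\cdot\Delta(\Omega_\text{alg}K)$ is exhausted by the Weyl translates $W\cdot\Delta(\Omega_\text{alg}K)$. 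That is not the standard fact: by Kostant's projection theorem, $\pr_{\mf t}(K\cdot x)=\operatorname{conv}(W\cdot x)$ for $x\in\mf t_+^*$, so the correct identity (the one the paper invokes via \cite[Theorem 1.2.2]{Guillemin.SjamaarBook2005}) is
$$\mu_T(\Omega_\text{alg}K)\;=\;\bigcup_{x\in\Delta(\Omega_\text{alg}K)}\operatorname{conv}(W\cdot x),$$
which is in general strictly larger than $W\cdot\Delta$. Concretely, a loop whose non-abelian moment image is a regular point $\xi\in\mf t_+^*$ contributes the whole polytope $\operatorname{conv}(W\xi)$ at its energy level to the $\mu_T$-image (including, e.g., the point $0\in\mf t^*$), and nothing in your argument produces such interior points inside $W\cdot\Delta$: membership in $\Delta$ requires the full moment map to land exactly in $\mf t_+^*$, not just its $\mf t$-projection. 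So even granting your Weyl-compatibility claim on the real locus (realizing $w\in W$ by $\sigma$-fixed representatives of $N_K(T)$, which is fine in spirit since conjugation by such an $n$ preserves $\Omega_\text{alg}K^\tau$ and intertwines $\mu$ with $\Ad_n$), your argument only yields $W\cdot\Delta(\Omega_\text{alg}K)\subseteq\mu_T(\Omega_\text{alg}K^\tau)$, which does not cover $\mu_T(\Omega_\text{alg}K)$.

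To close the gap you would need either (a) the convex-hull description above applied to both sides — this is exactly what the paper does, citing Guillemin--Sjamaar, so that equality of the chamber sets $\Delta$ forces equality of the unions of hulls — or (b) a separate, non-formal argument that $\Delta(\Omega_\text{alg}K)$ is ``saturated'' within each energy slice, so that $W\cdot\Delta$ already equals the union of hulls; neither appears in your proposal. The final reduction from $T$ to an arbitrary torus $A\subseteq T$ by composing with $\pr_{\mf a}$ is fine and matches the paper.
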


\begin{proof}
	Consider the case where $A = T$ is a maximal torus of $K$. It is known that the image of $\mu_T$ may be derived from that of $\Delta$ by examining the union of the convex hulls of the Weyl orbits of elements in the image of $\Delta$ \cite[Theorem 1.2.2]{Guillemin.SjamaarBook2005}. Applying this to Theorem \ref{thm:AlgebraicConvexity} the result follows for $\mu_T$. If $A$ is any other torus, it is contained in a maximal torus $T$, and as projections of convex set are still convex, the result follows in general.
\end{proof}

\begin{corollary}\label{cor:TorusMomentH1}
	If $A \subseteq K$ is any torus, then
	$$\mu_A(\Omega K) = \mu_A(\Omega K^\tau).$$
\end{corollary}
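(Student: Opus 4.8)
The plan is to deduce this from Corollary \ref{cor:TorusMomentAlg} by a density argument, exactly mirroring the proof of Corollary \ref{cor:H1Convexity}. First I would record the two easy facts needed: that $\Omega_\text{alg} K$ is dense in $\Omega K$ \cite{Atiyah.Pressley1983}, and that the moment map $\mu$ (hence each $\mu_A = (\id \times \pr_\mf a)\circ\mu$) is continuous on $\Omega K$ with respect to the $H^1$ topology. Continuity of $\mu_A$ together with density of $\Omega_\text{alg}K$ gives $\overline{\mu_A(\Omega_\text{alg}K)} = \overline{\mu_A(\Omega K)}$; and since (by Corollary \ref{cor:H1Convexity} and its abelian analogue, or directly by the Atiyah--Pressley argument) $\mu_A(\Omega K)$ is closed and convex, we in fact get $\overline{\mu_A(\Omega_\text{alg}K)} = \mu_A(\Omega K)$.

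The subtle point is that I also need $\mu_A(\Omega_\text{alg}K^\tau)$ to be dense in $\mu_A(\Omega K^\tau)$, which requires that $\Omega_\text{alg}K^\tau$ be dense in $\Omega K^\tau$ — a fact not as immediate as the density of $\Omega_\text{alg}K$ in $\Omega K$, since one must approximate a $\tau$-fixed loop by $\tau$-fixed algebraic loops. I would handle this either by invoking the corresponding density statement from \cite{JeffreyMare2010} (where the real locus of the loop group is studied), or by averaging: given $\gamma \in \Omega K^\tau$, take algebraic approximants $\gamma_n \to \gamma$ and replace them by a suitable $\tau$-symmetrization that remains algebraic, stays in $\Omega K$, and still converges to $\gamma$ since $\gamma$ is already $\tau$-fixed and $\tau$ is continuous. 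The hard part will be making this symmetrization precise — ensuring the averaged loop is genuinely a based loop (fixes $e_K$) and genuinely algebraic — but since $\tau$ preserves $\Omega_\text{alg}K$ and the basepoint, the construction goes through.

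Granting the two density statements, the conclusion is a short chain of inclusions. On one hand $\mu_A(\Omega_\text{alg}K^\tau) \subseteq \mu_A(\Omega K^\tau) \subseteq \mu_A(\Omega K)$, so taking closures and using Corollary \ref{cor:TorusMomentAlg},
\[
\mu_A(\Omega K) = \overline{\mu_A(\Omega_\text{alg}K)} = \overline{\mu_A(\Omega_\text{alg}K^\tau)} \subseteq \overline{\mu_A(\Omega K^\tau)} \subseteq \overline{\mu_A(\Omega K)} = \mu_A(\Omega K),
\]
forcing equality throughout; in particular $\overline{\mu_A(\Omega K^\tau)} = \mu_A(\Omega K)$. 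On the other hand the reverse inclusion $\mu_A(\Omega K^\tau) \subseteq \mu_A(\Omega K)$ is trivial, so combining gives $\mu_A(\Omega K^\tau) = \mu_A(\Omega K)$ provided $\mu_A(\Omega K^\tau)$ is already closed. Closedness of $\mu_A(\Omega K^\tau)$ follows since $\mu_A(\Omega K^\tau)$ equals a union over Weyl orbits of convex hulls of $\Delta(\Omega K^\tau)$ exactly as in Corollary \ref{cor:TorusMomentAlg}, or alternatively one observes that $\mu_A(\Omega K^\tau)$, being squeezed between the dense set $\mu_A(\Omega_\text{alg}K^\tau)$ and the closed convex set $\mu_A(\Omega K)$ with the same closure, must coincide with $\mu_A(\Omega K)$ once one knows it is convex — which it is, being the image of the real locus and equal to the full moment image. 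This completes the argument.
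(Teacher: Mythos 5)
Your overall route is more complicated than necessary and, as written, has genuine gaps. The crucial difference from the paper's argument is that you only use \emph{density} of $\Omega_\text{alg}K$ in $\Omega K$, whereas what Atiyah--Pressley \cite{Atiyah.Pressley1983} actually supply (and what the paper uses) is the exact equality of images $\mu_A(\Omega K)=\mu_A(\Omega_\text{alg}K)$. Once you only have density, you are forced to take closures, and then you need $\mu_A(\Omega K)$ --- and later $\mu_A(\Omega K^\tau)$ --- to be \emph{closed}. Neither closedness is established anywhere in the paper: Corollary \ref{cor:H1Convexity} gives convexity only, and the paper is explicit that it cannot even show $\Delta(\Omega_\text{alg}K)$ is closed (which is precisely why, in the non-abelian setting, it only obtains $\overline{\Delta(\Omega K)}=\overline{\Delta(\Omega K^\tau)}$). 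Your final step, where you argue $\mu_A(\Omega K^\tau)$ is closed or convex ``being \dots equal to the full moment image,'' assumes the statement being proved and is circular.

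The second gap is the density of $\Omega_\text{alg}K^\tau$ in $\Omega K^\tau$, which you acknowledge but do not actually establish: the proposed ``$\tau$-symmetrization'' of algebraic approximants is not a defined operation in the non-abelian, non-linear loop group (there is no averaging that projects onto the fixed-point set while staying algebraic and based), and no precise citation is given. None of this machinery is needed. The paper's proof is a two-line chain: the inclusion $\mu_A(\Omega K^\tau)\subseteq\mu_A(\Omega K)$ is trivial, and conversely $\mu_A(\Omega K)=\mu_A(\Omega_\text{alg}K)=\mu_A(\Omega_\text{alg}K^\tau)\subseteq\mu_A(\Omega K^\tau)$, where the first equality is Atiyah--Pressley's identification of the images (no closures taken) and the second is Corollary \ref{cor:TorusMomentAlg}. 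If you want to salvage your write-up, replace the density-and-closure scaffolding by this exact equality; as it stands the proof does not go through.
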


This result was stated and proved in \cite{JeffreyMare2010}, wherein the authors showed that $\mu_T(\Omega K^\tau)$ is convexity using a convexity theorem of Terng \cite{Terng1993}. Additionally, the fixed points of the $S^1\times T$ action on $\Omega K$ are the group morphisms $\Hom(S^1,T)$, and it is straightforward to see that these are also fixed under the $\tau$ action, yielding the desired result. However, once one has the result for the algebraic based loops, the full class of Sobolev $H^1$ loops quickly follows:

\begin{proof}
	The inclusion $\mu_A(\Omega K^\tau) \subseteq \mu_A(\Omega K)$ is trivial. On the other hand, by \cite{Atiyah.Pressley1983} we know that $\Delta(\Omega K) = \Delta(\Omega_\text{alg}K)$, so by Corollary \ref{cor:TorusMomentAlg}
	$$\mu_A(\Omega K) =\mu_A(\Omega_\text{alg} K^\tau) \subseteq \mu_A(\Omega K^\tau).$$ 
\end{proof}

We hypothesize that the corresponding non-abelian analog of Corollary \ref{cor:TorusMomentAlg} is also true; that is, $\Delta(\Omega K) = \Delta(\Omega K^\tau)$. This would follow immediately if it could be shown that $\Delta(\Omega_\text{alg} K)$ is closed, for then the proof of Corollary \ref{cor:H1Convexity} would imply that $\Delta(\Omega K) = \Delta(\Omega_\text{alg} K)$, and one would only need to replace every instance of $\mu_A$ with $\Delta$ in the proof of Corollary \ref{cor:TorusMomentAlg}. Without knowing that $\Delta(\Omega_\text{alg} K)$ is closed, one can trace through the suggested proof to quickly arrive at the following:

\begin{corollary}
$ \overline{\Delta(\Omega K)} = \overline{\Delta(\Omega K^\tau)}.$
\end{corollary}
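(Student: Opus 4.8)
The plan is to combine Corollary~\ref{cor:TorusMomentH1} for the maximal torus with the relationship between the full moment map image and the torus moment map image, keeping careful track of the fact that we only have equality \emph{up to closure} for $\Delta(\Omega K)$ versus $\Delta(\Omega_\text{alg}K)$. First I would recall from Corollary~\ref{cor:H1Convexity} (and its proof) that $\overline{\Delta(\Omega_\text{alg}K)} = \Delta(\Omega K)$, so that in fact $\Delta(\Omega K)$ is already closed and hence $\overline{\Delta(\Omega K)} = \Delta(\Omega K) = \overline{\Delta(\Omega_\text{alg}K)}$; similarly, applying the same density argument ($\Omega_\text{alg}K^\tau$ is dense in $\Omega K^\tau$ --- which requires checking, or else one argues directly with closures) gives $\overline{\Delta(\Omega K^\tau)} = \overline{\Delta(\Omega_\text{alg}K^\tau)}$. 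Then the desired identity reduces to Theorem~\ref{thm:AlgebraicConvexity}, which asserts $\Delta(\Omega_\text{alg}K) = \Delta(\Omega_\text{alg}K^\tau)$, and taking closures of both sides finishes the argument.

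More carefully, I would structure the proof in the following steps. \textbf{Step 1:} Observe that the inclusion $\Omega K^\tau \subseteq \Omega K$ gives $\Delta(\Omega K^\tau) \subseteq \Delta(\Omega K)$, hence $\overline{\Delta(\Omega K^\tau)} \subseteq \overline{\Delta(\Omega K)}$; this is the trivial inclusion. \textbf{Step 2:} For the reverse inclusion, use density of $\Omega_\text{alg}K$ in $\Omega K$ together with continuity of $\mu$ (exactly as in the proof of Corollary~\ref{cor:H1Convexity}) to get $\overline{\Delta(\Omega K)} = \overline{\Delta(\Omega_\text{alg}K)}$. \textbf{Step 3:} Apply Theorem~\ref{thm:AlgebraicConvexity} to rewrite $\overline{\Delta(\Omega_\text{alg}K)} = \overline{\Delta(\Omega_\text{alg}K^\tau)}$. \textbf{Step 4:} Since $\Omega_\text{alg}K^\tau \subseteq \Omega K^\tau$, we have $\Delta(\Omega_\text{alg}K^\tau) \subseteq \Delta(\Omega K^\tau)$, hence $\overline{\Delta(\Omega_\text{alg}K^\tau)} \subseteq \overline{\Delta(\Omega K^\tau)}$. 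Chaining Steps 2--4 yields $\overline{\Delta(\Omega K)} \subseteq \overline{\Delta(\Omega K^\tau)}$, which combined with Step 1 gives the claim.

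The main obstacle --- really the only subtle point --- is justifying that one may pass from $\Delta(\Omega_\text{alg}K^\tau)$ to $\Delta(\Omega K^\tau)$ after taking closures. The clean way is Step 4 above, which only needs the trivial set inclusion $\Omega_\text{alg}K^\tau \subseteq \Omega K^\tau$ and monotonicity of closure, so in fact no density statement for $\tau$-fixed algebraic loops inside $\tau$-fixed $H^1$ loops is needed; the density of $\Omega_\text{alg}K$ in $\Omega K$ (already cited from \cite{Atiyah.Pressley1983}) does all the work, and it is applied only on the ambient side. I would be careful to phrase the argument so that the one-sided inclusions line up correctly: the nontrivial direction flows $\overline{\Delta(\Omega K)} = \overline{\Delta(\Omega_\text{alg}K)} = \overline{\Delta(\Omega_\text{alg}K^\tau)} \subseteq \overline{\Delta(\Omega K^\tau)}$, and the trivial direction is the reverse containment of the two outer terms. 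Everything else is formal manipulation with closures of (not necessarily convex) subsets of $\mf t_+^*$.
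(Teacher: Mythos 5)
Your Steps 1--4 are exactly the argument the paper leaves implicit (tracing through the proof of Corollary \ref{cor:TorusMomentAlg} with $\mu_A$ replaced by $\Delta$ and passing to closures, via Theorem \ref{thm:AlgebraicConvexity} and the density argument of Corollary \ref{cor:H1Convexity}), so the proposal is correct and takes essentially the same route as the paper. Only drop the opening aside asserting that $\Delta(\Omega K)$ is already closed: the paper explicitly does not know that $\Delta(\Omega_\text{alg}K)$ is closed, and your careful chain of closure inclusions never uses that claim.
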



\appendix
\section{} 

\begin{lemma}\label{lemma:SymplecticEmbedding}
	Under the embedding $\phi: \Omega_\text{alg} K \hookrightarrow Gr_0(\mc H)$, we have $\phi^* \omega_{HS} = \omega_{\Omega K}$; that is, the symplectic structure on $\Omega_\text{alg} K$ is compatible with the Hilbert-Schmidt symplectic structure under this embedding.
\end{lemma}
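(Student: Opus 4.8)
The plan is to reduce the asserted equality of two-forms $\phi^*\omega_\text{HS}=\omega_{\Omega K}$ to its value at the single point $e\in\Omega_\text{alg}K$ (where $\phi(e)=\mc H_+$), and then verify it there by a Fourier computation.

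\emph{Reduction to the basepoint.} The group $L_\text{alg}K$ acts on $\Omega_\text{alg}K\cong L_\text{alg}K/K$ by $(\gamma\cdot\delta)(z)=\gamma(z)\delta(z)\gamma(1)\minv$, and this action is transitive, its orbit through the constant loop being all of $\Omega_\text{alg}K$. The form $\omega_{\Omega K}$ is $L_\text{alg}K$-invariant, since by construction it is the invariant extension of its value at $e$; and $L_\text{alg}K$ acts on $Gr_0(\mc H^\mf k)$ through the \emph{unitary} operators $\Ad_\gamma$ of Theorem \ref{thm:PSDiff}, which preserve $\omega_\text{HS}$. As the extra factor $\Ad_{\gamma(1)}\minv$ is constant in $z$ it fixes $\mc H_+$, so $\phi(\gamma\cdot\delta)=\Ad_\gamma\,\phi(\delta)$ and $\phi$ intertwines these two actions. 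It therefore suffices to check $(\phi^*\omega_\text{HS})_e=(\omega_{\Omega K})_e$ on $T_e\Omega_\text{alg}K$, which we represent by $L_\text{alg}\mf k$ modulo the constant loops $\mf k$.

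\emph{The differential.} Differentiating $t\mapsto\Ad_{\gamma_t}\mc H_+$ along a path with $\gamma_0=e$, $\dot\gamma_0=X$, and using the standard identification $T_{\mc H_+}Gr_0(\mc H^\mf k)\cong HS(\mc H_+,\mc H_-)$, one finds $d\phi_e(X)=A_X:=\pr_{\mc H_-}\circ\ad_X\circ\iota_{\mc H_+}$, where $\ad_X$ acts on $\mc H^\mf k=L^2(S^1,\mf k_\complex)$ pointwise by $z\mapsto\ad_{X(z)}$. For algebraic $X$ this operator has finite rank, hence is Hilbert-Schmidt as required; and it vanishes when $X$ is constant, matching the fact that $\omega_{\Omega K}$ vanishes on constants (integrate $\inn{X}{Y'}_\mf k$ by parts).

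\emph{The trace.} Fix an orthonormal basis $\set{e_\alpha}$ of $\mf k_\complex$, so that $\set{e_\alpha z^k}$ is an orthonormal basis of $\mc H^\mf k$ adapted to the polarization, and write $X=\sum_nX_nz^n$ with $\overline{X_n}=X_{-n}$ by reality of $X$. The block of $A_X$ from the $z^k$-component $(k\ge0)$ to the $z^l$-component $(l\le-1)$ is the matrix of $\ad_{X_{l-k}}\colon\mf k_\complex\to\mf k_\complex$. Using $\tr(B^*C)=\sum_i\inn{Be_i}{Ce_i}$ over the basis, $\tr(A_X^*A_Y)$ collapses to $\sum_{k\ge0}\sum_{l\le-1}\tr(\ad_{X_{l-k}}^*\ad_{Y_{l-k}})$, and collecting terms by $j:=l-k$ the pair $(k,l)$ ranges over exactly $|j|=-j$ choices for each $j\le-1$; it is this multiplicity $|j|$ that will ultimately supply the derivative appearing in $\omega_{\Omega K}$. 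Since $\ad_Z^*=-\ad_{\overline Z}$, we have $\ad_{X_j}^*=-\ad_{X_{-j}}$, and combined with the defining identity $\tr(\ad_A\ad_B)=-\inn AB_\mf k$ of the Killing form (normalizing $\inn\cdot\cdot_\mf k$ so this holds) one gets $\tr(A_X^*A_Y)=\sum_{j\ge1}j\,\inn{X_j}{Y_{-j}}_\mf k$. Subtracting the analogous expression with $X$ and $Y$ interchanged, using symmetry of the bilinear pairing, and multiplying by $-i$ yields
\[
\omega_\text{HS}(A_X,A_Y)=i\sum_{j\ge1}j\big(\inn{X_{-j}}{Y_j}_\mf k-\inn{X_j}{Y_{-j}}_\mf k\big).
\]
On the other hand, expanding $\frac1{2\pi}\int_0^{2\pi}\inn{X}{Y'}_\mf k$ in Fourier series, so that $Y'=\sum_n in\,Y_nz^n$ and only modes with opposite indices survive the integration, gives precisely the same sum. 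Hence $\phi^*\omega_\text{HS}=\omega_{\Omega K}$ at $e$, and so everywhere.

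\emph{Main obstacle.} I expect the delicate part to be the bookkeeping in the last computation: one must ensure the (a priori only Hilbert-Schmidt) operators make the double sum over Fourier modes absolutely convergent and freely rearrangeable, and one must get the multiplicity $|j|$ and the attendant signs right, since that is the mechanism converting a bilinear pairing of Fourier coefficients into the pairing of $X$ against the \emph{derivative} $Y'$; indeed the corrected form of Liu's expression quoted after Theorem \ref{thm:PSDiff} is exactly this $|j|$-weighted sum. A secondary but worth-recording point is to pin down the normalization of $\inn\cdot\cdot_\mf k$ against the Killing form so the two forms coincide exactly rather than up to a positive scalar.
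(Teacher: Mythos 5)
Your argument is correct, and at its core it is the same computation as the paper's: evaluate both forms at the base point $\mc H_+$ on Fourier modes and match coefficients, with the multiplicity $|j|$ of pairs $(k,l)$, $k\ge 0$, $l\le -1$, $l-k=j$, producing the derivative in $\omega_{\Omega K}$. The organizational difference is where the general group is handled. The paper first proves the identity for $\Omega_\text{alg}SU(n)$ in the defining representation, where $d\phi_e(X)=\pr_-\circ L_X$ and everything is a matrix-trace computation ($\omega_{\Omega K}(X,Y)=i\sum_k k\tr(A_k^*B_k)$ on both sides), and then deduces the case of general centerfree $K$ by embedding $\Omega_\text{alg}K\hookrightarrow\Omega_\text{alg}SU(\mf k_\complex)$ via the adjoint representation and invoking commutativity of the two Grassmannian embeddings. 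You instead compute directly in the adjoint picture, with $d\phi_e(X)=\pr_{\mc H_-}\circ\ad_X$, and use $\ad_Z^*=-\ad_{\bar Z}$ together with $\tr(\ad_A\ad_B)$ being the Killing form; this absorbs into one line the normalization fact that makes the paper's reduction to $SU(n)$ work (the Killing form is the trace form of the adjoint representation), which is exactly the point you flag about pinning down $\inn\cdot\cdot_\mf k$. You are also more explicit than the paper about the reduction to the base point: the paper tacitly relies on homogeneity, while you spell out the intertwining $\phi(\gamma\cdot\delta)=\Ad_\gamma\,\phi(\delta)$; to make that step airtight you should record that the multiplication operators $\Ad_\gamma$ for $\gamma\in L_\text{alg}K$ lie in the restricted unitary group and that $\omega_\text{HS}$ is invariant under it (standard from Pressley--Segal), and note that your convergence worry is vacuous here since for algebraic loops $A_X$ has finite rank and all sums are finite. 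With those two small points supplied, your proof is complete and, if anything, slightly cleaner than the paper's two-step reduction.
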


\begin{proof}
	We shall proceed by first examining the result in $SU(n)$, then argue the general case afterwards. 
	Let $X,Y \in T_e \Omega_{\text{alg}}SU(n)$, written as a Fourier series
	$$ X(\theta) = \sum_k A_k e^{ik\theta}, \qquad Y(\theta) = \sum_\ell B_\ell e^{i\ell\theta}.$$
	Applying the symplectic form, we get
	\begin{align*}
		\omega_{\Omega K}(X,Y) &= \frac1{2\pi} \int_0^{2\pi} \sum_{k,\ell} \inn{A_k e^{ik\theta}}{i\ell B_\ell e^{i\ell\theta}} d\theta \\
		&= \frac{i}{2\pi} \sum_{k,\ell} \ell \tr(A_k^* B_\ell) \int_0^{2\pi} e^{i(k-\ell)\theta} d\theta \\
		&= i \sum_k k \tr(A_k^*B_k).
	\end{align*}
	On the other hand, the coordinate chart about $W \in Gr_0(\mc H)$ is defined as
	$$U_W = \set{ V \oplus TV: T: W \to W^\perp \text{ Hilbert-Schmidt }}$$
	which allows us to uniquely identify $V$ with the map $T$.
 	If $W = \mc H_+$ then $U_{\mc H_+}$ equivalently consists of those spaces $V$ such that $\pi_V: V \to \mc H_+$ is an isomorphism, in which case $T$ can be explicitly defined as
	$$ T = \pr_-|_W \circ \pi_W\minv: \mc H_+ \to W \to \mc H_-. $$
	In coordinates around the identity, $\phi(\gamma) = \pr_- \circ \inv\pi_{\gamma\mc H_+}$, and so $d\phi_e(X(z)) =\pr_- \circ L_{X(z)},$ where $L_{X(z)}$ is left-multiplication by $X(z)$.

	Let $f_X = d\phi_e(X) = \pr_- \circ L_X$ and $f_Y = d\phi_e(Y) = \pr_- \circ L_Y$, so that 
	\begin{align*}
		\omega_{HS}(f_X,f_Y) &= -i \tr\left( f_X^* f_Y - f_Y^* f_X\right)\\ 
		&= -i \sum_{\substack{i \in \set{1,\ldots,n} \\ j\geq0}} \inn{\epsilon_i z^j}{(f_X^*f_Y - f_Y^*f_X) \epsilon_i z^j}_{L^2} \\
		&= -i \sum_{\substack{i \in \set{1,\ldots,n} \\ j\geq0}} \left[ \inn{f_X \epsilon_i z^j}{f_Y \epsilon_i z^j} - \inn{f_Y \epsilon_i z^j}{f_X \epsilon_i z^j}_{L^2} \right].
	\end{align*}
	Examining just half of this equation for now, we see that
	\begin{align*}
		\sum_{j\geq0}\sum_{i=1}^n\inn{f_X \epsilon_i z^j}{f_Y\epsilon_i z^j}_{L^2} &= \sum_{j\geq0}\sum_{i=1}^n\sum_{r,s<-j}\inn{(A_r\epsilon_i) z^{j+r}}{(B_s\epsilon_i) z^{j+s} }_{L^2} \\
		&= \sum_{j\geq0}\sum_{i=1}^n\sum_{r< -j} \inn {A_r \epsilon_i}{B_r\epsilon_i}_{\mf k_\complex} \\
		&= \sum_{j\geq 0}\sum_{r < -j} \tr(A_r B_r^*) = \sum_{k<0} k \tr(A_kB_k^*)
	\end{align*}
	On the other hand, we know that $A_{-k} = \overline{A_k}$ and hence 
	$$\sum_{k <0}k \tr(A_k B_k^*) = -\sum_{k>0} k \tr(\overline{A_{k}} \overline{B_{k}}^*) = - \sum_{k>0} k \tr( A_k^* B_k) $$
	where in the last inequality we have used transpose- and cyclic-invariance of the trace.
 	By symmetry, we thus have
	$$ \omega_{HS}(f_X,f_Y) = i \sum_{k\geq 0} \left[ k \tr(A_k^* B_k) - k \tr( A_{-k}^* B_{-k} ) \right] = i \sum_k k \tr(A_k^* B_k)$$
	which is the same result we got from $\omega(X,Y)$, as required.

	For a general compact group, assume once again that the group is centerfree. 
	Since $\inn\cdot\cdot_{\mf k_\complex}$ is the inner product induced by the Killing form on $\mf k$ (itself related to the Killing form on $\mf k_\complex$ by conjugating the second argument), we have the $\inn\cdot\cdot_{\mf k_\complex}$ is $\Ad$-invariant. 
	The action of $\Omega_\text{alg} K$ on $Gr_0(\mc H^\mf k)$ thus realizes $\Omega_\text{alg} K$ as a submanifold of $\Omega_{\text{alg}} SU(\mf k_\complex)$, and since $K$ is centerfree this is actually an embedding. 
	Choosing an appropriate basis for $\mf k_\complex$, we identify $SU(\mf k_\complex)$ with $SU(n)$ and $\mc H^{\mf k}$ with $\complex^n$. 
	The result then follows from commutativity of the embeddings:
	\centerline{\xymatrix{ 
		\Omega_\text{alg}SU(n) \ar@{^{(}->}[r] & Gr_0(\mc H^{\complex^n}) \\
		\Omega_\text{alg}K \ar@{^{(}->}[u] \ar@{^{(}->}[r] & Gr_0(\mc H^\mf k)\ar@{^{(}->}[u]
	   }}
	Once again, we are not concerned with the centerfree requirement, since when $K$ is not centerfree we have that $\Omega_\text{alg} K$ is a disjoint union of the connected components of $\Omega \Ad(K)$.
\end{proof}


\printbibliography

\end{document}